\theoremstyle{plain}%note that this is the default style (to learn)
\newtheorem{theorem}{Theorem}[section]
\newtheorem{corollary}[theorem]{Corollary}
\newtheorem{lemma}[theorem]{Lemma}
\theoremstyle{definition}
\newtheorem{remark}[theorem]{Remark}
\theoremstyle{Conjecture}
\newcommand{\Spvek}[2][r]{%
	\gdef\@VORNE{1}
	\left(\hskip-\arraycolsep%
	\begin{array}{#1}\vekSp@lten{#2}\end{array}%
	\hskip-\arraycolsep\right)}
\def\vekSp@lten#1{\xvekSp@lten#1;vekL@stLine;}
\def\vekL@stLine{vekL@stLine}
\def\xvekSp@lten#1;{\def\temp{#1}%
	\ifx\temp\vekL@stLine
	\else
	\ifnum\@VORNE=1\gdef\@VORNE{0}
	\else\@arraycr\fi%
	#1%
	\expandafter\xvekSp@lten
	\fi}
\begin{document}
	
	\title{Orthogonal polynomials on generalized Julia sets}
	
	%about the article that should go on the front page should be
	%placed here. General acknowledgments should be placed at the end of the article.}
	
	\author{G\"{o}kalp Alpan}
	\address{Department of Mathematics, Bilkent University, 06800 Ankara, Turkey}
	\email{gokalp@fen.bilkent.edu.tr}
	\thanks{The authors are partially supported by a grant from T\"{u}bitak: 115F199.}
	\author{Alexander Goncharov}
	\address{Department of Mathematics, Bilkent University, 06800 Ankara, Turkey}
	
	\email{goncha@fen.bilkent.edu.tr}

	\date{Received: date / Accepted: date}
	% The correct dates will be entered by the editor

	\subjclass[2010]{37F10, 42C05 \and 30C85}
	\keywords{Julia sets, Parreau-Widom sets, orthogonal polynomials, Jacobi matrices}
	\begin{abstract}
		We extend results by Barnsley et al. about orthogonal polynomials on Julia sets to the case of generalized Julia sets. The equilibrium measure is considered. In addition, we discuss optimal smoothness of Green's functions and Parreau-Widom criterion for a special family of real generalized Julia sets.

		% \PACS{PACS code1 \and PACS code2 \and more}

	\end{abstract}
		\maketitle
	\section{Introduction}
	
	Let $f$ be a rational function in $\overline{\mathbb{C}}$. Then the set of all points $z\in\overline{\mathbb{C}}$ such that the sequence of iterates $(f^n(z))_{n=1}^\infty$ is normal in the sense of Montel is called the \emph{Fatou} set of $f$. The complement of the Fatou set is called the \emph{Julia} set of $f$ and we denote it by $J_{(f)}$. We use the adjective \emph{autonomous} in order to refer to these usual Julia sets in the text.
	
	Potential theoretical tools for Julia sets of polynomials were developed in \cite{Brolin} by Hans Brolin. Orthogonal polynomials for polynomial Julia sets were considered in \cite{Barnsley1,Barnsley3}. Barnsley et al. show  how one can find recurrence coefficients when the Julia set $J_{(f)}$ corresponding to a nonlinear polynomial is real. Ma\~{n}\'{e} and Rocha, in \cite{Mane}, show that Julia sets are uniformly perfect in the sense of Pommerenke and in particular they are regular with respect to the Dirichlet problem.
	
	Let $(f_n)$ be a sequence of rational functions. Define $F_0(z):=z$ and $F_n(z)= f_n\circ F_{n-1}(z)$ for all $n\in\mathbb{N}$, recursively. The union of the points $z$ such that the sequence $(F_n (z))_{n=1}^\infty$ is normal is called the Fatou set for $(f_n)$ and the complement of the Fatou set is called the Julia set for $(f_n)$. We use the notation $J_{(f_n)}$ to denote it. These sets were introduced in \cite{Fornaess}. For a general overview we refer the reader to the paper \cite{Bruck}. For a recent discussion of Chebyshev polynomials on these sets, see \cite{alp4}.
	
	In this paper, we consider orthogonal polynomials with respect to the equilibrium measure of $J_{(f_n)}$ where $(f_n)$ is a sequence of nonlinear polynomials satisfying some mild conditions. To our knowledge, this paper is the first attempt dealing with the orthogonal polynomials in this generality although considerable work (see e.g. \cite{Barnsley1,Barnsley3,bes2}) has been done for the autonomous case and there are some results (see e.g. \cite{alpgon,yuditskii}) concerning the orthogonal polynomials on sets constructed using compositions of infinitely many polynomials. While the focus of \cite{yuditskii} is quite different than what we discuss, a particular family of sets considered in \cite{alpgon,gonc} clearly presents generalized Julia sets.
	
	In Section 2, we give background information about the properties of $J_{(f_n)}$ regarding potential theory. In Section 3, we prove that for certain degrees, orthogonal polynomials associated with the equilibrium measure of $J_{(f_n)}$ are given explicitly in terms of the compositions $F_n$. In Section 4, we show that the recurrence coefficients can be calculated provided that $J_{(f_n)}$ is real. These two results generalize Theorem 3 in \cite{Barnsley1} and Theorem 1 in \cite{Barnsley3} respectively. In addition to these results we discuss resolvent functions and a general method to construct real Julia sets. Techniques that we use here are rather different compared to those of autonomous setting. This is mostly due to the fact that, in the generalized case, Julia sets do not have complete invariance but we only have the properties given in part $(e)$ of Theorem 1.
	
	In Section 6, we consider a quadratic family of polynomials $(f_n)$ such that the set $K_1(\gamma)=J_{(f_n)}$ is a modification of the set $K(\gamma)$ from \cite{gonc}. In terms of the parameter $\gamma$ we give a criterion for the Green function
	$G_{\overline{\mathbb{C}}\setminus K_1(\gamma)}$ to be optimally smooth. In the last section, a criterion is
	presented for $K_1(\gamma)$ to be a Parreau-Widom set.

	\section{Preliminaries}
	Polynomial Julia sets are one of the most studied objects in one dimensional complex dynamics. For classical results related to potential theory, see \cite{Brolin}. For a more general exposition we refer to the monograph \cite{Milnor} and the survey \cite{Lyubich}.
	
	In this paper, we study in the more general framework of Julia sets. Clearly, Theorem \ref{orti} and Theorem \ref{Jacobi} are also valid for the autonomous Julia sets.
	
	Let the polynomials $f_n(z)=\sum_{j=0}^{d_n}a_{n,j}\cdot z^j$ be given where $d_n\geq 2$ and $a_{n,d_n}\neq 0$ for all $n\in\mathbb{N}$. Following \cite{Bruck}, we say that $(f_n)$ is a \emph{regular polynomial sequence} if for some positive real numbers $A_1,A_2,A_3$, the following properties are satisfied:
	\begin{itemize}
		\item $|a_{n,d_n}|\geq A_1$, $\forall n\in\mathbb{N}$.
		\item $|a_{n,j}|\leq A_2 |a_{n,d_n}|$ for $j=0,1,\ldots, d_n-1$ and $n\in\mathbb{N}$.
		\item $\log{|a_{n,d_n}|}\leq A_3\cdot d_n$
		for all $n\in\mathbb{N}$.
	\end{itemize}
	
	We use the notation $(f_n)\in\mathcal{R}$ if $(f_n)$ is a regular polynomial sequence. We remark that, for a sequence $(f_n)\in\mathcal{R}$, the degrees of polynomials need not to be the same and they do not have to be bounded above either.  Julia sets $J_{(f_n)}$ when $(f_n)\in\mathcal{R}$ were introduced and considered in \cite{Buger} and all results given in the next theorem are from Section 2 and Section 4 of the paper \cite{Bruck}. While \eqref{11} is contained in the proof of Theorem 4.2 in \cite{Bruck}, \eqref{22} follows by comparing the right parts of these two equations, using that $G_{\overline{\mathbb{C}}\setminus J_{(f_n)}}$ has a logarithmic singularity at infinity and $F_k(z)$ goes locally uniformly to $\infty$ for such $z$.
	\begin{theorem}\label{general}
		Let $(f_n)\in\mathcal{R}$. Then the following propositions hold:
		\begin{enumerate}[label={(\alph*})]
			\item The set $\mathcal{A}_{(f_n)}(\infty):=\{z\in\overline{\mathbb{C}}: \mbox{ $F_k(z)$ goes locally uniformly to $\infty$}\}$ is an open connected set containing $\infty$. Moreover, for every $R>1$ satisfying the inequality $$A_1 R\left(1-\frac{A_2}{R-1}\right)>2,$$ the compositions $F_{n}(z)$ goes locally uniformly to infinity whenever $z\in\triangle_R$ where $\triangle_R= \{z\in\overline{\mathbb{C}}: |z|>R\}.$
			\item $\displaystyle \mathcal{A}_{(f_n)}(\infty)=\cup_{k=1}^{\infty}{F_k}^{-1}(\triangle_R)$ and $f_n({\overline{{\triangle}_R}})\subset \triangle_R$ if $R>1$ satisfies the inequality given in part $(a)$. Furthermore, we have $J_{(f_n)}=\partial \mathcal{A}_{(f_n)}(\infty)$.
			\item $J_{(f_n)}$ is regular with respect to the Dirichlet problem. The Green function for the complement of the set is given by 
			\begin{equation}\label{11}
				G_{\overline{\mathbb{C}}\setminus J_{(f_n)}}(z) =
				\left\{
				\begin{array}{ll}
					\lim_{k\rightarrow\infty} \frac{1}{d_1\cdots d_k} \log{|F_k(z)|} & \mbox{if } z\in \mathcal{A}_{(f_n)}(\infty),\\ 0 & \mbox{otherwise. }
				\end{array}
				\right. 
			\end{equation}
			Moreover, 
			\begin{equation}\label{22}
				G_{\overline{\mathbb{C}}\setminus J_{(f_n)}}(z) =\lim_{k\rightarrow\infty} \frac{1}{d_1\cdots d_k} G_{\overline{\mathbb{C}}\setminus J_{(f_n)}}{(F_k(z))}
			\end{equation}
			where $z\in\mathcal{A}_{(f_n)}(\infty)$. In both \eqref{11} and \eqref{22}, limits hold locally uniformly in $\mathcal{A}_{(f_n)}(\infty).$
			
			\item The logarithmic capacity of the compact set $J_{(f_n)}$ is given by the expression $$\displaystyle \mathrm{Cap}(J_{(f_n)})= \exp{\left(-\lim_{k\rightarrow \infty}\sum_{j=1}^{k}\frac{\log{|a_{j,d_j}|}}{d_1\cdots d_j}\right)}.$$
			
			\item $F_k^{-1}(F_k(J_{(f_n)}))=J_{(f_n)}$ and $J_{(f_n)}= F_k^{-1}(J_{(f_{k+n})})$ for all $k\in\mathbb{N}$. Here we use the notation $(f_{k+n})=(f_{k+1},f_{k+2},f_{k+3}, \ldots).$
		\end{enumerate}
	\end{theorem}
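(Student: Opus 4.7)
My plan is to follow \cite{Bruck} and \cite{Buger}: turn the three axioms defining $\mathcal{R}$ into explicit escape estimates on $\triangle_R$ and feed them into standard potential-theoretic arguments, handling items in the order (a)$\Rightarrow$(b)$\Rightarrow$(c)$\Rightarrow$(d), then extracting (e) from the conjugation between $(f_n)$ and its tail.

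For (a), on $|z|>R$ the triangle inequality gives
$$|f_n(z)|\geq |a_{n,d_n}||z|^{d_n}\!\left(1-\frac{A_2}{|z|-1}\right),$$
and the bounds $|a_{n,d_n}|\geq A_1$, $d_n\geq 2$, $|z|^{d_n}\geq R|z|$, together with the hypothesis on $R$, upgrade this to $|f_n(z)|\geq 2|z|$. Hence $f_n(\overline{\triangle_R})\subset\triangle_R$ and $|F_k(z)|\geq 2^k|z|$ on $\triangle_R$, giving locally uniform escape. Openness of $\mathcal{A}_{(f_n)}(\infty)$ is built into the normal-family definition, and connectedness follows because every component of $\mathcal{A}_{(f_n)}(\infty)$ meeting $\triangle_R$ must contain the connected set $\triangle_R\ni\infty$. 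Part (b) is then immediate: escape is eventually visible in $\triangle_R$, so $\mathcal{A}_{(f_n)}(\infty)=\bigcup_k F_k^{-1}(\triangle_R)$; the containment $f_n(\overline{\triangle_R})\subset\triangle_R$ was proved along the way, and $J_{(f_n)}=\partial\mathcal{A}_{(f_n)}(\infty)$ reflects that $\mathcal{A}_{(f_n)}(\infty)$ is a Fatou component.

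For (c), set $g_k(z):=\tfrac{1}{d_1\cdots d_k}\log|F_k(z)|$ on $\mathcal{A}_{(f_n)}(\infty)$. Writing $f_{k+1}(w)=a_{k+1,d_{k+1}}w^{d_{k+1}}(1+O(1/|w|))$ and using the geometric escape of $F_k$,
$$g_{k+1}(z)-g_k(z)=\frac{\log|a_{k+1,d_{k+1}}|}{d_1\cdots d_{k+1}}+O\!\left(\frac{1}{d_1\cdots d_{k+1}\,|F_k(z)|}\right),$$
and the series over $k$ converges uniformly on compacta of $\mathcal{A}_{(f_n)}(\infty)$ by the axiom $\log|a_{k,d_k}|\leq A_3 d_k$. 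Hence $g_k\to h$ locally uniformly with $h$ harmonic, $h(z)=\log|z|+O(1)$ near $\infty$, and $h(z)\to 0$ as $z\to J_{(f_n)}$; this identifies $h=G_{\overline{\mathbb{C}}\setminus J_{(f_n)}}$ and yields Dirichlet regularity of $J_{(f_n)}$, proving \eqref{11}. Expanding at $\infty$ and comparing with the telescoped identity
$$\frac{\log|F_k(z)|}{d_1\cdots d_k}=\log|z|+\sum_{j=1}^{k}\frac{\log|a_{j,d_j}|}{d_1\cdots d_j}+o(1)\qquad(z\to\infty)$$
isolates the capacity formula in (d); and for \eqref{22}, substitute the expansion $h(w)=\log|w|-\log\mathrm{Cap}(J_{(f_n)})+o(1)$ at $w=F_k(z)$, divide by $d_1\cdots d_k$, and let $k\to\infty$.

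Finally, for (e), let $(G_n)$ denote the iterate sequence of the shifted family $(f_{k+n})$, so $F_{k+n}=G_n\circ F_k$ and therefore $z\in\mathcal{A}_{(f_n)}(\infty)$ iff $F_k(z)\in\mathcal{A}_{(f_{k+n})}(\infty)$. Taking boundaries gives $J_{(f_n)}=F_k^{-1}(J_{(f_{k+n})})$, and surjectivity of $F_k\colon\overline{\mathbb{C}}\to\overline{\mathbb{C}}$ upgrades this to $F_k(J_{(f_n)})=J_{(f_{k+n})}$, whence $F_k^{-1}(F_k(J_{(f_n)}))=J_{(f_n)}$. The main obstacle is the identification of the harmonic limit $h$ with the Green function in part (c): harmonicity and the behaviour at $\infty$ are routine consequences of the Cauchy estimate, but continuous vanishing on $J_{(f_n)}$ requires Dirichlet regularity, which has to be extracted from the escape estimate rather than assumed; once that is secured, every remaining step follows mechanically.
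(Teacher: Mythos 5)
A point of context before the review: the paper you are working from does not actually prove this theorem. It is cited from Sections 2 and 4 of Br\"{u}ck--B\"{u}ger \cite{Bruck}, and the only in-text argument supplied is the one-line derivation of \eqref{22} from \eqref{11} (substitute the asymptotic $G(w)=\log|w|-\log\mathrm{Cap}(J_{(f_n)})+o(1)$ at $w=F_k(z)$, divide by $d_1\cdots d_k$, let $k\to\infty$). Your own derivation of \eqref{22} coincides with that, and your escape estimate for (a) and (b) is sound: for $|z|>R$ the triangle inequality plus the three axioms of $\mathcal{R}$ give
$|f_n(z)|\geq A_1|z|^{d_n}\left(1-\frac{A_2}{|z|-1}\right)\geq A_1R\left(1-\frac{A_2}{R-1}\right)|z|>2|z|,$
so $f_n(\overline{\triangle_R})\subset\triangle_R$ and $|F_k(z)|\geq 2^k|z|$ on $\triangle_R$, and the telescoping computation for (c) and the expansion at $\infty$ for (d) are also correct as far as they go. Part (e) is handled correctly, using that a nonconstant polynomial is continuous, open and surjective on $\overline{\mathbb{C}}$, so $\partial F_k^{-1}(A)=F_k^{-1}(\partial A)$.

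However, there are two genuine gaps. First, your proof of connectedness of $\mathcal{A}_{(f_n)}(\infty)$ only shows that any component meeting $\triangle_R$ contains $\triangle_R$, hence that there is at most one such component; it does not exclude bounded components of $\mathcal{A}_{(f_n)}(\infty)$ disjoint from $\triangle_R$. To rule these out you need the maximum principle: if $V$ were such a component, then $\partial V\subset J_{(f_n)}$, and by your (b) every point of $J_{(f_n)}$ satisfies $|F_k|\leq R$ for all $k$; the maximum principle applied to the polynomial $F_k$ on $V$ would then force $|F_k|\leq R$ throughout $V$, contradicting $V\subset\mathcal{A}_{(f_n)}(\infty)$. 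This step is missing.

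The second and more serious gap is the one you yourself flag at the end, namely the continuous vanishing of $h=\lim g_k$ on $J_{(f_n)}$, equivalently the Dirichlet regularity of $J_{(f_n)}$. The locally uniform convergence of $g_k$ lives entirely inside the open set $\mathcal{A}_{(f_n)}(\infty)$ and gives no boundary control; saying the regularity ``has to be extracted from the escape estimate'' is not an argument. In Br\"{u}ck's treatment this is a substantive step (one shows the generalized Julia set is uniformly perfect, in the spirit of Ma\~{n}\'{e}--Rocha \cite{Mane}, which implies regularity), and without it the identification $h=G_{\overline{\mathbb{C}}\setminus J_{(f_n)}}$, and therefore \eqref{11}, \eqref{22}, and the capacity formula in (d), remain unproved. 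You correctly isolated this as the crux, but the proposal leaves it open.
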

	
	We have to note that for the sequences $(f_n)\in\mathcal{R}$ satisfying the additional condition $d_n=d$ for some $d\geq 2$, there is a nice theory concerning topological properties of Julia sets. For details, see \cite{comerford,Urbanski}.
	
	Before going any further, we want to mention the results from \cite{Barnsley1} and \cite{Barnsley3} concerning orthogonal polynomials for the autonomous Julia sets. Let $f(z)= z^n+ k_1 z^{n-1}+\ldots+ k_n$ be a nonlinear monic polynomial of degree $n$ and let $P_j$ denote the $j$-th monic orthogonal polynomial associated to the equilibrium measure of $J_{(f)}$. Then we have,
	
	\begin{enumerate}[label={(\alph*})]
		\item $P_1(z)= z+{k_1}/n$.
		\item $P_{l n} (z)=P_l(f(z))$, for $l=0,1,\ldots$
		\item $P_{n^l}(z)=f^l(z)+{k_1}/n$ for $l=0,1,\ldots$, where $f^l$ is the $l$-th iteration of the function $f$.
	\end{enumerate}
	
	In Theorem \ref{orti}, we recover parts $(a)$ and $(c)$ in a more general setting. Even without having the analogous equations to part $(b)$, recurrence coefficients appear as the outcome of Theorem \ref{Jacobi}.
	
	Throughout the whole article when we say that $(f_n)\in\mathcal{R}$ then the sequences $(d_n)$, $(a_{n,j})$, $(A_i)_{i=1}^3$ will be used just as in the definition given in the beginning of this section and $F_n(z)$ will stand for $f_n\circ \ldots \circ f_1(z)$. Thus $F_n$ is a polynomial  with the leading coefficient $(a_{1,d_1})^{d_2\cdots d_l}(a_{2,d_2})^{d_3\cdots d_l}\cdots a_{l,d_l}$ of degree $d_1\cdots d_{n}.$  For a compact non-polar set $K$, we denote the Green function of $\Omega$ with pole at infinity by $G_{\overline{\mathbb{C}}\setminus K}$ where $\Omega$ is the connected component of $\overline{\mathbb{C}}\setminus K$ containing $\infty$. We use $\mu_K$ to denote the equilibrium measure of $K$. Convergence of measures is considered in weak-star topology. In addition, we consider and count multiple roots of a polynomial separately.
	\section{Orthogonal polynomials}
	We begin with a lemma due to Brolin \cite{Brolin}.
	\begin{lemma}\label{Brolin}
		Let $K$ and $L$ be two non-polar compact subsets of $\mathbb{C}$ such that $K\subset L$. Let $(\mu_n)_{n=1}^\infty$ be a sequence of probability measures supported on $L$ that converges to a measure $\mu$ supported on $K$. Suppose that the following two conditions hold where $U_n(z)$ stands for the logarithmic potential for the measure $\mu_n$ and $V_K$ is the Robin constant for $K$:
		\begin{enumerate}[label={(\alph*})]
			\item $\displaystyle\liminf_{n\rightarrow\infty}U_n(z)\geq V_K$ on $K$.
			\item $ \mathrm{supp}{(\mu_K)}=K$.
		\end{enumerate}
		Then $\mu=\mu_K$  .
	\end{lemma}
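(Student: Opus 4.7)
My plan is to prove the energy identity $I(\mu)=V_K$, where $I(\mu)=\int U^{\mu}\,d\mu$ is the logarithmic energy of $\mu$; together with the positive definiteness of the logarithmic energy on signed measures of total mass zero, this forces $\mu=\mu_K$. Since $\mu$ is a probability measure supported on $K$, the inequality $I(\mu)\ge V_K$ is automatic from the minimizing property of the equilibrium measure, so the task reduces to the reverse bound $I(\mu)\le V_K$.

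The main tool is the principle of descent. Because $\log\frac{1}{|z-w|}$ is lower semicontinuous in $w$ and uniformly bounded below as $w$ ranges over the bounded set $L$, the weak-star convergence $\mu_n\to\mu$ yields $U^{\mu}(z)\le \liminf_{n} U^{\mu_n}(z)$ for every $z\in\mathbb{C}$. To convert condition (a) into information about $U^{\mu}$, I would sandwich $V_K$ as follows: integrating (a) against $\mu_K$ (supported on $K$ by (b)) and applying Fatou's lemma on one side, then Fubini together with Frostman's maximum principle $U^{\mu_K}\le V_K$ on the other,
\begin{equation*}
V_K \;\le\; \int \liminf_n U^{\mu_n}\,d\mu_K \;\le\; \liminf_n \int U^{\mu_n}\,d\mu_K \;=\; \liminf_n \int U^{\mu_K}\,d\mu_n \;\le\; V_K,
\end{equation*}
so every inequality must be an equality. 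In particular $\liminf_n U^{\mu_n}(z)=V_K$ for $\mu_K$-almost every $z\in K$, and combining with the principle of descent gives $U^{\mu}(z)\le V_K$ on a set of full $\mu_K$-measure in $K$. By hypothesis (b) this set is dense in $K$, and the lower semicontinuity of $U^{\mu}$ promotes the inequality to every $z\in K$. Integrating against $\mu$ (supported in $K$) then produces $I(\mu)\le V_K$, which closes the argument.

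The step I expect to be most delicate is the last promotion from a $\mu_K$-almost everywhere bound to a pointwise bound on all of $K$: here hypothesis (b) enters essentially — ensuring the full-measure set is dense in $K$ (and containing any isolated atom of $\mu_K$) — and one then leans on the lsc of $U^{\mu}$. The remaining technicalities, namely absolute integrability for the Fubini exchange and a uniform lower bound on $U^{\mu_n}$ over $L$ needed for Fatou, are routine consequences of the compactness of $L$ and the standard estimates on logarithmic potentials.
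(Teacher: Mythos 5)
The paper does not prove this lemma; it is stated as a cited result from Brolin \cite{Brolin}, so there is no in-text argument to compare against. That said, your proof is correct and self-contained. The sandwich
\begin{equation*}
V_K \le \int \liminf_n U^{\mu_n}\,d\mu_K \le \liminf_n \int U^{\mu_n}\,d\mu_K = \liminf_n \int U^{\mu_K}\,d\mu_n \le V_K
\end{equation*}
is valid: the first step is hypothesis (a) integrated against $\mu_K$ (note $\operatorname{supp}\mu_K\subset K$ always, not only under (b)); Fatou applies because $U^{\mu_n}\ge -\log\operatorname{diam}(L)$ on $L$ uniformly in $n$; the Fubini exchange is justified by Tonelli after shifting the kernel by this same constant; and the last step is Frostman's bound $U^{\mu_K}\le V_K$. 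Forcing equality throughout and subtracting the nonnegative integrand gives $\liminf_n U^{\mu_n}=V_K$ $\mu_K$-a.e., and the principle of descent then gives $U^{\mu}\le V_K$ on a $\mu_K$-full subset of $K$. Your promotion to all of $K$ is the right place to invoke (b): the set $\{U^{\mu}\le V_K\}$ is closed by lower semicontinuity, a $\mu_K$-full set is dense in $K$ precisely because $\operatorname{supp}\mu_K=K$, and a closed set containing a dense subset of $K$ contains $K$. Integrating against $\mu$ (a probability measure on $K$, since weak-star limits of probability measures on the compact $L$ are probability measures) yields $I(\mu)\le V_K$, while $I(\mu)\ge V_K$ is the minimality of $\mu_K$; uniqueness of the energy minimizer finishes. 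One small point worth being explicit about in the last step: the positive-definiteness argument expands $I(\mu-\mu_K)$ and needs $\int U^{\mu_K}\,d\mu=V_K$, which holds because $I(\mu)<\infty$ implies $\mu$ charges no polar set and $U^{\mu_K}=V_K$ q.e.\ on $K$; alternatively one can simply cite the uniqueness clause of Frostman's theorem. In short: no gap, the approach is sound, and it is the natural potential-theoretic route one expects behind Brolin's statement.
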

	
	Let $(f_n)\in\mathcal{R}$. Then, by the fundamental theorem of algebra (FTA), $F_k(z)-a=0$ has $d_1\cdots d_k$ solutions counting multiplicities. For a fiven $k$, let us define the normalized counting measure as $\nu_{k}^a=\frac{1}{d_1\cdots d_k}\sum_{l=1}^{d_1\cdots d_k}  \delta_{z_l}$ where $z_1, \ldots, z_{d_1\cdots d_k}$ are the roots of $F_k(z)-a$. In \cite{Brolin} and later on in \cite{Bruck1}, it is shown that $\nu_{k}^a\rightarrow \mu_{J_{(f_n)}}$ for a proper $a$ where in the first article $f_n=f$ with a monic nonlinear polynomial $f$ and in the second one $f_n(z)=z^2+c_n$. Our technique used below is the same in essence with the proofs in \cite{Brolin,Bruck1}. Due to some minor changes and for the convenience of the reader, we include the proof of the theorem.
	
	\begin{theorem}\label{weak} Let $(f_n)\in\mathcal{R}$. Then for $a\in\mathbb{C}\setminus \overline{\mathbb{D}}$ satisfying the condition
		\begin{equation}\label{eq1}
			|a|A_1\left(1-\frac{A_2}{|a|-1}\right)>2,
		\end{equation}
		we have $\nu_{k}^a\rightarrow \mu_{J_{(f_n)}}$.
	\end{theorem}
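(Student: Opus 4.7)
The plan is to apply Lemma~\ref{Brolin} with $K=J_{(f_n)}$. Writing $c_k$ for the leading coefficient of $F_k$, so that $F_k(z)-a=c_k\prod_{l=1}^{d_1\cdots d_k}(z-z_l)$, the logarithmic potential of $\nu_k^a$ is
\[
U_k(z)=\frac{\log|c_k|}{d_1\cdots d_k}-\frac{\log|F_k(z)-a|}{d_1\cdots d_k}.
\]
By Theorem~\ref{general}(d), $(d_1\cdots d_k)^{-1}\log|c_k|$ converges to the Robin constant $V_{J_{(f_n)}}$, so the task reduces to controlling the second term on $J_{(f_n)}$ and locating the possible weak-$*$ limits.

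First I would verify hypothesis (a) of Lemma~\ref{Brolin}. Since \eqref{eq1} is strict, one can choose $1<R<|a|$ still satisfying the inequality of Theorem~\ref{general}(a); then by parts (a)--(b) of that theorem, $J_{(f_n)}$ and every shifted Julia set $J_{(f_{k+n})}$ lie in $\{|w|\le R\}$. Part (e) gives $F_k(J_{(f_n)})=J_{(f_{k+n})}$, so for every $z\in J_{(f_n)}$,
\[
0<|a|-R\le|F_k(z)-a|\le|a|+R.
\]
Hence $(d_1\cdots d_k)^{-1}\log|F_k(z)-a|\to 0$ uniformly on $J_{(f_n)}$, which yields $U_k\to V_{J_{(f_n)}}$ uniformly there (so hypothesis (a) holds in the strongest possible form).

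Next I would localize the supports of $\nu_k^a$ and show every weak-$*$ limit is supported on $J_{(f_n)}$. Using the bounds defining $\mathcal{R}$, for $M$ large enough one has $|f_n(z)|\ge|z|$ whenever $|z|\ge M$, uniformly in $n$; choosing also $M\ge|a|$, every root of $F_k(z)-a$ lies in the compact set $L=\{|z|\le M\}$, so the family $(\nu_k^a)$ is tight. Applying Theorem~\ref{general}(a) to the shifted sequence $(f_{k+n})$ (same $A_1,A_2,A_3$) gives $a\in\mathcal{A}_{(f_{k+n})}(\infty)$; composing with $F_k$, every root of $F_k(z)-a$ actually lies in $\mathcal{A}_{(f_n)}(\infty)$. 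For any $z_0\in\mathcal{A}_{(f_n)}(\infty)$, local uniform divergence $F_k\to\infty$ furnishes a neighborhood $U$ of $z_0$ on which $|F_k(z)|>|a|$ for all large $k$, so $\nu_k^a(U)=0$ eventually; the Portmanteau inequality $\mu(U)\le\liminf\nu_k^a(U)$ forces $\mu(U)=0$ for any weak-$*$ limit $\mu$. Thus every such limit is supported on $J_{(f_n)}$.

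With both hypotheses of Lemma~\ref{Brolin} in hand (the hypothesis $\mathrm{supp}(\mu_{J_{(f_n)}})=J_{(f_n)}$ being standard since $J_{(f_n)}=\partial\mathcal{A}_{(f_n)}(\infty)$ is its own outer boundary and is regular by Theorem~\ref{general}(c)), every weak-$*$ limit of $(\nu_k^a)$ equals $\mu_{J_{(f_n)}}$, and tightness upgrades this to convergence of the whole sequence. The main technical point I expect is the localization step in paragraph three: ensuring that preimages of $a$ cannot accumulate inside the Fatou set, which is precisely what places the limit on $J_{(f_n)}$ and allows Brolin's lemma to be applied.
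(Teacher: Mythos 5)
Your proposal is correct and follows essentially the same route as the paper: apply Lemma~\ref{Brolin} after establishing tightness via Helly, localizing the supports inside $\mathcal{A}_{(f_n)}(\infty)$ using locally uniform divergence, and controlling the potentials $U_k$ on $J_{(f_n)}$ via the leading-coefficient formula and part (d) of Theorem~\ref{general}. The one small refinement you add is the two-sided estimate $|a|-R\le|F_k(z)-a|\le|a|+R$ coming from $F_k(J_{(f_n)})=J_{(f_{k+n})}\subset\{|w|\le R\}$, which upgrades the paper's $\liminf U_{k_l}\ge V_K$ to uniform convergence $U_k\to V_K$ on $J_{(f_n)}$ — a slightly stronger conclusion than Brolin's lemma requires, but the overall argument is the same.
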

	\begin{proof} Choose a number $a\in\mathbb{C}\setminus \overline{\mathbb{D}}$ satisfying \eqref{eq1}. Let $K:=J_{(f_n)}$ and $L:=\{z\in\mathbb{C}: |z|\leq |a| \}$. Then, by part $(b)$ of Theorem \ref{general}, $K\subsetneq L$. Moreover, since $K$ is regular with respect to the Dirichlet problem and $K$ is equal to the boundary of the component of $\overline{\mathbb{C}}\setminus K$ that contains $\infty$, we have (see e.g. Theorem 4.2.3. of \cite{Ransford}) that $\mathrm{supp}{(\mu_K)}=K$.
		
		Observe that, ${F_k}^{-1}(a)\cap\mathcal{A}_{(f_n)}(\infty)$ is contained in $L$ for all $k\in\mathbb{N}$ by part $(b)$ of Theorem \ref{general}. Thus, $(\nu_k^{a})_{k=1}^{\infty}$ has a convergent subsequence $({\nu_{k_l}^{a}})_{l=1}^\infty$ by Helly's selection principle (see e.g. Theorem 0.1.3. in \cite{saff}). Let us denote the limit by $\mu$. The set $\cup{F_k}^{-1}(a)$ can not accumulate to a point $z$ in $\mathcal{A}_{(f_n)}(\infty)$, since this would contradict with the fact that $F_k(z)$ goes locally uniformly to $\infty$  by part $(a)$ of Theorem \ref{general}. Thus, $\mathrm{supp}(\mu)\subset \partial \mathcal{A}_{(f_n)}(\infty)=K$.
		
		Now, we want to show that $\displaystyle\liminf_{l\rightarrow\infty}U_{k_l}(z)\geq V_K$ for all $z\in K$. Let $z\in K$ where $U_k$ denote the logarithmic potential for $\nu_k^{a}$. We have $$|F_{k_l}(z)-a|= |(a_{1,d_1})^{d_2\cdots d_{k_l}}||(a_{2,d_2})^{d_3\cdots d_{k_l}}|\cdots |a_{k_l,d_{k_l}}|\prod_{j=1}^{d_1\cdots d_{k_l}} |z-z_{j,k_l}|,$$ for some $z_{j,k_l}\in L$. Thus,
		\begin{equation}\label{eq2} U_{k_l}(z)=\frac{\sum_{j=1}^{d_1\cdots d_{k_l}}\log{|z-z_{j,k_l}|}}{-d_1\cdots d_{k_l}}=\sum_{j=1}^{d_1\cdots d_{k_l}}\frac{\log{|a_{j,d_j}|}}{d_1\cdots d_j}-\frac{\log{|F_{k_l}(z)-a|}}{d_1\cdots d_{k_l}}.
		\end{equation}
		Using part $(d)$ of Theorem \ref{general} and the fact that $|F_k(z)|\leq |a|$ for $z\in K$, we see that the following inequality follows from \eqref{eq2}:  $$\displaystyle \liminf_{l\rightarrow\infty} U_{k_l}(z)\geq \liminf_{l\rightarrow\infty} \left(\sum_{j=1}^{d_1\cdots d_{k_l}}\frac{\log{|a_{j,d_j}|}}{d_1\cdots d_j}-\frac{\log{|2a|}}{d_1\cdots d_{k_l}}\right)\geq V_K.$$
		Hence, by Lemma \ref{Brolin}, we have $\nu_{k_l}^a\rightarrow \mu_K$. Since $(\nu_{k_l}^a)$ is an arbitrary convergent subsequence,  $\nu_{k}^a\rightarrow \mu_K$ also holds.
	\end{proof}
	
	In the next theorem, we use algebraic properties of polynomials as well as analytic properties of the corresponding Julia sets. Let $f(z)= a_n z^n+ a_{n-1}z^{n-1}+\ldots a_0$ be a nonlinear polynomial of degree $n$ and let $z_1, z_2, \ldots, z_n$ be roots of $f$ counting multiplicities. Then, for $k=1,2,\ldots n-1$ we have the following Newton's identities:
	\begin{equation}\label{newton}
		s_k(f(z))+\frac{a_{n-1}}{a_n}s_{k-1}(f(z))+
		\ldots+\frac{a_{n-k+1}}{a_n}s_1(f(z))=-k\frac{a_{n-k}}{a_n},
	\end{equation}
	where $s_k(f(z)):=\sum_{j=1}^n (z_j)^k$.
	
	For the proof of \eqref{newton}, see \cite{Mead} among others. Note that, none of these equations include the term $a_0$. This implies that the values $(s_k)_{k=1}^{n-1}$ are invariant under translation of the function $f$, i.e.
	\begin{equation}\label{invar}
		s_k(f(z))=s_k(f(z)+c)
	\end{equation} for any $c\in\mathbb{C}$. Let $(P_j)_{j=1}^{\infty}$ denote the sequence of monic orthogonal polynomials associated to $\mu_{J_{(f_n)}}$ where $\deg{P_j}=j$. Now we are ready to prove our first main result.
	\begin{theorem}\label{orti}
		For $(f_n)\in \mathcal{R}$, we have the following identities:
		\begin{enumerate}[label={(\alph*})]
			\item $\displaystyle P_1(z)=z+ \frac{1}{d_1}\frac{a_{1, d_1-1}}{ a_{1,d_1}}.$
			\item $\displaystyle P_{d_1\cdots d_l}(z)=\frac{1}{(a_{1,d_1})^{d_2\cdots d_l}(a_{2,d_2})^{d_3\cdots d_l}\cdots a_{l,d_l}}\left(F_l(z)+\frac{1}{d_{l+1}}\frac{a_{l+1,d_{l+1}-1}}{a_{l+1,d_{l+1}}}\right).$
		\end{enumerate}
	\end{theorem}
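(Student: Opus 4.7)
The plan is to deduce both parts from Theorem \ref{weak} together with the Newton/Vieta invariance \eqref{invar}. For part $(a)$, fix any $a\in\mathbb{C}\setminus\overline{\mathbb{D}}$ satisfying \eqref{eq1}. Since $P_1$ must be monic of degree $1$ and orthogonal to constants, it suffices to compute $m_1:=\int z\,d\mu_{J_{(f_n)}}$. By Theorem \ref{weak} and weak-star convergence applied to the continuous function $z$, we have
\[
m_1=\lim_{k\to\infty}\frac{1}{d_1\cdots d_k}\sum_{z:\,F_k(z)=a}z=-\lim_{k\to\infty}\frac{b_{N-1}^{(k)}}{d_1\cdots d_k\,\cdot\,b_N^{(k)}},
\]
where $b_N^{(k)}$ and $b_{N-1}^{(k)}$ denote the two leading coefficients of $F_k$ (the constant shift by $-a$ does not affect them). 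I would then prove by induction on $k$ that $b_{N-1}^{(k)}/b_N^{(k)}=(d_2\cdots d_k)\,a_{1,d_1-1}/a_{1,d_1}$; the inductive step follows from $F_k=f_k\circ F_{k-1}$ by expanding $(F_{k-1})^{d_k}$ and noting that all lower powers $(F_{k-1})^{j}$ with $j<d_k$ have degree strictly below $d_1\cdots d_k-1$ (since $d_1\cdots d_{k-1}\geq 2$). Dividing gives $m_1=-\frac{1}{d_1}\frac{a_{1,d_1-1}}{a_{1,d_1}}$ independently of $k$, which yields $(a)$.

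For part $(b)$, set $c_l:=\frac{1}{d_{l+1}}\frac{a_{l+1,d_{l+1}-1}}{a_{l+1,d_{l+1}}}$. The polynomial on the right is monic of degree $d_1\cdots d_l$ by the formula for the leading coefficient of $F_l$ recalled in the preliminaries, so I only need orthogonality:
\[
I_m:=\int z^{m}\bigl(F_l(z)+c_l\bigr)\,d\mu_{J_{(f_n)}}=0\qquad\text{for }0\leq m\leq d_1\cdots d_l-1.
\]
Writing $\tilde{F}_k:=f_{l+k}\circ\cdots\circ f_{l+1}$ so that $F_{k+l}=\tilde{F}_k\circ F_l$, I would partition the fibre $F_{k+l}^{-1}(a)$ over the values $w\in\tilde{F}_k^{-1}(a)$:
\[
\sum_{z:\,F_{k+l}(z)=a}z^{m}\bigl(F_l(z)+c_l\bigr)=\sum_{w:\,\tilde{F}_k(w)=a}(w+c_l)\!\sum_{z:\,F_l(z)=w}z^{m}.
\]
By \eqref{invar} applied to the polynomial $F_l(z)-w$ (whose coefficients agree with those of $F_l$ except in the constant term), the inner sum $S_m:=\sum_{z:\,F_l(z)=w}z^{m}$ is independent of $w$ for every $m\leq d_1\cdots d_l-1$.

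Dividing by $d_1\cdots d_{k+l}=(d_1\cdots d_l)(d_{l+1}\cdots d_{k+l})$ and using Theorem \ref{weak} twice, I obtain
\[
I_m=\frac{S_m}{d_1\cdots d_l}\lim_{k\to\infty}\frac{1}{d_{l+1}\cdots d_{k+l}}\sum_{w:\,\tilde{F}_k(w)=a}(w+c_l)=\frac{S_m}{d_1\cdots d_l}\int(w+c_l)\,d\mu_{J_{(f_{l+n})}}(w),
\]
where the last step uses that the shifted sequence $(f_{l+n})$ is again in $\mathcal{R}$ with the same constants $A_1,A_2,A_3$, so Theorem \ref{weak} applies with the same $a$. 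The final integral vanishes by part $(a)$ applied to $(f_{l+n})$, giving $I_m=0$ and hence $(b)$. The main delicate point I expect is the factorisation-plus-invariance step: one must verify that the identity $F_{k+l}=\tilde{F}_k\circ F_l$ respects multiplicities when partitioning the fibre, and that Newton's identities really eliminate the $w$-dependence for \emph{every} $m$ up to $d_1\cdots d_l-1$ (the case $m=d_1\cdots d_l-1$ being the tightest); otherwise the argument is a straightforward combination of the preliminaries.
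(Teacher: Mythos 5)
Your proof is correct and the overall machinery (weak-star convergence of the root-counting measures $\nu_k^a$, plus the invariance \eqref{invar} of power sums of roots under shifts of the constant term) is the same as the paper's, but you organize both parts a bit differently, and in a way that is arguably cleaner. For part $(a)$ the paper groups the roots of $F_m(z)=a$ into blocks of size $d_1$ coming from $f_1(z)=\beta_1^j$ and then applies Newton's identity $s_1(f_1)=-a_{1,d_1-1}/a_{1,d_1}$ blockwise, whereas you compute the total sum of roots of $F_k-a$ directly from the two leading coefficients of $F_k$ by Vieta and an easy induction; this avoids the blockwise decomposition entirely for $(a)$ at the modest cost of the coefficient induction. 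For part $(b)$ the paper first proves $\int(F_l+c)\,d\nu_m^a=0$ directly by another application of Newton's identity (to $f_{l+1}$), and then factors out $\overline{s_k(F_l)}$ to get orthogonality to $z^k$; you instead factor out $S_m$ right away and reduce everything to $\int(w+c_l)\,d\mu_{J_{(f_{l+n})}}(w)=0$, which you obtain by applying part $(a)$ to the shifted sequence $(f_{l+n})\in\mathcal{R}$. That bootstrap through the tail sequence is a genuinely nice reorganization; the paper does not do it, although the two arguments are of course morally equivalent.

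Two small remarks. First, since $J_{(f_n)}$ is in general a compact subset of $\mathbb{C}$ and the inner product in $L^2(\mu_{J_{(f_n)}})$ is sesquilinear, the quantity you want to kill is $\int\bigl(F_l(z)+c_l\bigr)\,\overline{z^m}\,d\mu_{J_{(f_n)}}$, with a conjugate on $z^m$, exactly as in the paper; your computation goes through verbatim with $\overline{z^m}$ in place of $z^m$ (the inner sum becomes $\overline{s_m(F_l-w)}=\overline{s_m(F_l)}$, still independent of $w$), so this is cosmetic, but it should be fixed. Second, the "delicate point" you flag, namely that $F_{k+l}=\tilde F_k\circ F_l$ respects multiplicities under the fibre decomposition, is fine: writing $\tilde F_k(w)-a=\alpha\prod_j(w-w_j)$ with the $w_j$ listed with multiplicity immediately gives $F_{k+l}(z)-a=\alpha\prod_j\bigl(F_l(z)-w_j\bigr)$, so the multiset of roots of $F_{k+l}-a$ is the disjoint union of the multisets of roots of $F_l-w_j$, which is exactly what your partition of the sum uses. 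Likewise the invariance \eqref{invar} covers all $m$ with $1\le m\le d_1\cdots d_l-1$ because Newton's identities up to $s_{n-1}$ for a degree-$n$ polynomial never involve the constant term, and $m=0$ is trivial ($S_0=d_1\cdots d_l$), so the case $m=d_1\cdots d_l-1$ that you singled out causes no extra trouble.
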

	\begin{proof}
		(a) Let $(f_n)\in\mathcal{R}$ be given and $a\in\mathbb{C}\setminus \overline{\mathbb{D}}$ satisfy \eqref{eq1}. Fix an integer $m$ greater than $1$. By FTA, The solutions of the equation $F_m(z)=a$ satisfy an equation of the form $$\left(F_{m-1}(z)-\beta_{m-1}^1\right)\dots(F_{m-1}(z)-\beta_{m-1}^{d_m})=0,$$
		where $\beta_{m-1}^1,\ldots,\beta_{m-1}^{d_m}\in\mathbb{C}$. The $d_1\cdots d_{m-1}$ roots of the equation $F_{m-1}-\beta_{m-1}^j=0$ are the solutions of an equation
		$$(F_{m-2}(z)-\beta_{m-2}^{1,j})\dots (F_{m-2}(z)-\beta_{m-2}^{d_{m-1},j})=0,$$ with some $\beta_{m-2}^{1,j},\dots, \beta_{m-2}^{d_{m-1},j}$. Continuing this way, the points satisfying the equation $F_m(z)=a$ can be grouped into $d_2\cdots d_m$ parts of size $d_1$ such that each part consists of the roots of an equation $$f_1(z)-\beta_1^{j}=0,$$ for $j\in\{1,\ldots, d_2\cdots d_m\}$ and $\beta_1^j\in\mathbb{C}$. If  for each $j$, we denote the normalized counting measure on the roots of $f_1(z)-\beta_1^{j}$ by $\lambda_j$, then $$\nu_m^a= \frac{1}{d_2\cdots d_m}\sum_{j=1}^{d_2\cdots d_m} \lambda_j.$$ Hence, by \eqref{newton} and \eqref{invar},
		\begin{equation*}
			\int z\, d\nu_m^a =\frac{1}{d_2\cdots d_m}\sum_{j=1}^{d_2\cdots d_m}\int z\,d\lambda_j =
			\frac{1}{d_2\cdots d_m}\sum_{j=1}^{d_2\cdots d_m}\frac{s_1(f_1(z)-\beta_1^j)}{d_1}
		\end{equation*}
		$$
		= \frac{1}{d_1\cdots d_m}\sum_{j=1}^{d_2\cdots d_m}s_1(f_1(z)) =
		-\frac{1}{d_1}\frac{a_{1,d_1-1}}{a_{1,d_1}}.
		$$
		Since $\nu_m^a$ converges to the equilibrium measure of ${J(f_n)}$ by Theorem \ref{weak}, the result follows.
		
		(b) Let $m, l\in\mathbb{N}$ where $m>l+1$. As above, the roots of the equation $F_m(z)=a$ where $a\in\mathbb{C}\setminus \overline{\mathbb{D}}$ satisfies \eqref{eq1}, can be grouped into $d_{l+2}\cdots d_m$ parts of size $d_1\cdots d_{l+1}$ such that each part obeys an equation of the form $$F_{l+1}(z)-\beta_{l+1}^{j}=0,$$ for $j=1,2,\ldots, d_{l+2}\cdots d_m$. Recall that
		$F_{l+1}(z)=f_{l+1}(t)$ with $t= F_{l}(z).$
		
		By FTA, we have $f_{l+1}(t)-\beta_{l+1}^{j}=(t-\beta_{l}^{1,j})\cdots (t-\beta_{l}^{d_{l+1},j})$ for some $\beta_{l}^{1,j},\ldots, \beta_{l}^{d_{l+1},j}$. By \eqref{newton} and \eqref{invar}, for $k\in\{1,\ldots, d_{l+1}-1\}$ and $j,j^\prime\in\{1,\ldots, d_{l+2}\cdots d_m\}$, we have
		\begin{equation*}
			s_k(f_{l+1}(t)-\beta_{l+1}^{j}):= \sum_{r=1}^{d_{l+1}}(\beta_{l}^{r,j})^k=\sum_{r=1}^{d_{l+1}}(\beta_{l}^{r,j^\prime})^k= s_k(f_{l+1}(t)-\beta_{l+1}^{j^\prime}).
		\end{equation*}
		
		Now we can rewrite $F_{l+1}(z)-\beta_{l+1}^{j}=0$ as $(F_l(z)-\beta_{l}^{1,j})\cdots (F_l(z)-\beta_{l}^{d_{l+1},j})=0$ for $j$ as above.
		Let us denote the normalized counting measures on the roots of $F_l(z)-\beta_{l}^{r,j}=0$ by $\lambda_{r,j}$ for $r=1,\ldots,d_{l+1}$ and $j=1,\ldots,d_{l+2}\cdots d_m$. Clearly, this yields
		\begin{equation}\label{countt}
			\nu_m^a= \frac{1}{d_{l+2}\cdots d_m}\sum_{j=1}^{d_{l+2}\cdots d_m}\frac{1}{d_{l+1}}\sum_{r=1}^{d_{l+1}} \lambda_{r,j}=\frac{1}{d_{l+1}\cdots d_m}\sum_{j=1}^{d_{l+2}\cdots d_m}\sum_{r=1}^{d_{l+1}}\lambda_{r,j}.
		\end{equation}
		Thus, by using \eqref{countt}, \eqref{newton} and \eqref{invar}, we deduce that
		\begin{align*}
			\int F_l(z)\,d\nu_m^a &= \frac{1}{d_{l+1}\cdots d_m} \sum_{j=1}^{d_{l+2}\cdots d_m}\sum_{r=1}^{d_{l+1}}\int F_l(z)\,d\lambda_{r,j}\\
			&= \frac{1}{d_{l+1}\cdots d_m} \sum_{j=1}^{d_{l+2}\cdots d_m}\sum_{r=1}^{d_{l+1}}\beta_{l}^{r,j}\\
			&=\frac{1}{d_{l+1}\cdots d_m}\sum_{j=1}^{d_{l+2}\cdots d_m}s_1(f_{l+1}(t)-\beta_{l+1}^{j})\\
			&=\frac{1}{d_{l+1}\cdots d_m}\sum_{j=1}^{d_{l+2}\cdots d_m}s_1(f_{l+1}(t))\\
			&=-\frac{1}{d_{l+1}}\frac{a_{l+1,d_{l+1}-1}}{a_{l+1,d_{l+1}}}.
		\end{align*}
		To shorten notation, we write $c$ instead of $\frac{1}{d_{l+1}}\frac{a_{l+1,d_{l+1}-1}}{a_{l+1,d_{l+1}}}.$
		Thus, we have
		\begin{equation}\label{varan1}
			\int\left(F_l(z)+c\right)d\nu_m^a=0.
		\end{equation}
		
		Let us show that the integrand is orthogonal to $z^k$ with $1\leq k \leq d_1\cdots d_l-1$ as well. For the same
		$\lambda_{r,j},$ as above, we have
		$$ \int\left(F_l(z)+ c \right)\overline{z^k}\,d\lambda_{r,j} =\frac{1}{d_1\cdots d_{l}} \left(\beta_{l}^{r,j}+ c\right) \cdot \, \overline{s_k\left(F_l(z)-\beta_{l}^{r,j}\right)}.$$
		By \eqref{invar}, $ \overline{s_k\left(F_l(z)-\beta_{l}^{r,j}\right)}= \overline{s_k\left(F_l(z)\right)},$ so it does not
		depend on $r$ or $j.$ This and the representation \eqref{countt} imply that
		$$ \int\left(F_l(z)+c \right)\overline{z^k}\,d\nu_m^a= \frac{1}{d_{l+1}\cdots d_{m}}\sum_{j=1}^{d_{l+2}\cdots d_m} \sum_{r=1}^{d_{l+1}} \int\left(F_l(z)+ c \right)\overline{z^k}\,d\lambda_{r,j} $$
		$$=\frac{\overline{s_k\left(F_l(z)\right)}}{d_1\dots d_l} \int\left(F_l(z)+c \right)\,d\nu_m^a,$$
		where the last term is equal to $0$, by \eqref{varan1}. It follows that $\left(F_l(z)+c\right)\perp z^k$ for $k\leq \deg F_l-1$
		in $L^2(\mu_{J_{(f_n)}}),$ since $\nu_m^a$ converges to the equilibrium measure of ${J(f_n)}.$ This completes the proof of the theorem. 
		
	\end{proof}
	\section{Moments and resolvent functions}
	In this section we consider Julia sets that are subsets of the real line. 
	
	If $\mu$ is a probability measure which has infinite compact support  in $\mathbb{R}$, then the monic orthogonal polynomials $(P_n)_{n=1}^\infty$ satisfy a recurrence relation
	\begin{equation*}
		P_{n+1}(x)=(x-{b_{n+1}})P_n(x)-a_n^2 P_{n-1}(x),
	\end{equation*}
	assuming that $P_0=1$ and $P_{-1}=0$. If the moments $c_n=\int x^n d\mu$ are known for all $n\in\mathbb{N}_0$ then we have the formula
	\begin{equation}\label{hankel}
		p_n(x)=\frac{1}{\sqrt{D_n D_{n-1}}}\begin{vmatrix} c_0 & c_1 & \dots & c_n\\c_1 & c_2 & \dots &c_{n+1}\\ \vdots& \vdots & &\vdots\\ c_{n-1} & c_n & \dots & c_{2n-1}\\1 & x & \dots & x^n\end{vmatrix}
	\end{equation}
	where $p_n$ is the $n$-th orthonormal polynomial and $D_n$ is the determinant for the matrix $M_n$ with the entries $(M_n)_{i,j}=c_{i+j}$ for $i,j=0,1,\ldots n$. From \eqref{hankel}, one can also calculate recurrence coefficients $(a_n,b_n)_{n=1}^{\infty}$. See \cite{ase} for a detailed description of the orthogonal polynomials on the real line. In the next theorem, we show that the moments for the equilibrium measure of $J_{(f_n)}$ can be calculated recursively whenever $(f_n)\in\mathcal{R}$. Note that $c_0=1$ since the equilibrium measure is of unit mass.
	\begin{theorem}\label{Jacobi}Let $(f_n)\in\mathcal{R}$ and $l>0$ be an integer. Furthermore, let $$\frac{F_l(z)}{p_l}=z^{d_1\cdots d_l}+a_{d_1 d_2\cdots d_l-1}z^{d_1 d_2\cdots d_l-1}+\ldots+a_1 z+a_0,$$ where $p_l$ is the leading coefficient for $F_l$. Then, each moment $c_k=\int x^k d\mu_{J_{(f_n)}}$ for $k\in\{1, 2 \ldots, (d_1 d_2\cdots d_l)-1\}$ is equal to $\frac{s_k(F_l(z))}{d_1\cdots d_l}$ where $s_k(F_{l}(z))$ can be calculated recursively by Newton's identities.
	\end{theorem}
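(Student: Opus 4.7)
The plan is to imitate the strategy used in the proof of Theorem \ref{orti}, replacing the test function $F_l(z)+c$ by the monomial $z^k$ and using Newton's identities to identify the resulting integral. Fix $a\in\mathbb{C}\setminus\overline{\mathbb{D}}$ satisfying \eqref{eq1} and an integer $m>l$. By Theorem \ref{weak}, $\nu_m^{a}\to\mu_{J_{(f_n)}}$ weakly. Since part $(b)$ of Theorem \ref{general} guarantees that every $\nu_m^a$ is supported in the common compact set $\{|z|\leq |a|\}$, one may test the convergence against polynomials and obtain $\int z^k\,d\nu_m^a\to \int x^k\,d\mu_{J_{(f_n)}}=c_k$ for each $k\in\mathbb{N}$.

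Next I would compute $\int z^k\,d\nu_m^a$ by the grouping procedure from the proof of Theorem \ref{orti}. The $d_1\cdots d_m$ roots of $F_m(z)=a$ partition into $d_{l+1}\cdots d_m$ groups of size $d_1\cdots d_l$, where the $j$-th group is the root set of an equation $F_l(z)-\beta_l^j=0$ for some $\beta_l^j\in\mathbb{C}$. For each such group, the sum $\sum z^k$ over its roots is $s_k\bigl(F_l(z)-\beta_l^j\bigr)$. The key input is the translation invariance \eqref{invar}: provided $k\leq d_1\cdots d_l-1$, this power sum equals $s_k(F_l(z))$, independently of $j$. Summing over all $d_{l+1}\cdots d_m$ groups and normalizing yields
\[
\int z^k\,d\nu_m^a=\frac{1}{d_1\cdots d_m}\cdot d_{l+1}\cdots d_m\cdot s_k(F_l(z))=\frac{s_k(F_l(z))}{d_1\cdots d_l}.
\]
Since the right-hand side is independent of $m$, taking $m\to\infty$ gives $c_k=s_k(F_l(z))/(d_1\cdots d_l)$.

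Finally, the recursive calculability of $s_k(F_l(z))$ is immediate: applying Newton's identities \eqref{newton} to the monic polynomial $F_l(z)/p_l$ with coefficients $a_{d_1\cdots d_l-1},\ldots,a_0$ expresses $s_k(F_l(z))$ in terms of $s_1(F_l(z)),\ldots,s_{k-1}(F_l(z))$ and the coefficients $a_{d_1\cdots d_l-1},\ldots,a_{d_1\cdots d_l-k}$, for every $k\in\{1,\ldots,d_1\cdots d_l-1\}$.

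The only genuinely delicate point is the justification that weak-star convergence transfers to integrals of the polynomial $z^k$, which is continuous but not bounded on $\mathbb{C}$; this is handled by confining all measures to the fixed compact disk $\{|z|\leq|a|\}$ via Theorem \ref{general}$(b)$. The rest of the argument reduces to the same algebraic bookkeeping already exploited in Theorem \ref{orti} and therefore does not introduce new obstacles.
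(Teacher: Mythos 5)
Your argument is correct and is essentially the proof given in the paper: both group the $d_1\cdots d_m$ roots of $F_m(z)=a$ into $d_{l+1}\cdots d_m$ blocks that are root sets of $F_l(z)-\beta^j=0$, invoke the translation invariance \eqref{invar} of the power sums $s_k$ for $k\leq d_1\cdots d_l-1$, and pass to the limit via Theorem \ref{weak}. The only cosmetic difference is that you spell out explicitly why weak-star convergence may be tested against the unbounded function $z^k$ (uniform confinement of the $\nu_m^a$ to the disk $\{|z|\leq|a|\}$), a point the paper leaves implicit.
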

	\begin{proof}
		Let $m$ be an integer greater than $l$. Consider the roots of the equation $F_m(z)=a$ where $a\in\triangle_1$ satisfies the condition \eqref{eq1}. Then, following the proof of Theorem \ref{orti}, we can divide these roots into $d_{l+1}\cdots d_m$ parts of size $d_1\cdots d_l$ such that the nodes in each of the groups constitute the roots of an equation of the form  $$ F_l(z)-\beta^j=0,$$ for $j=1, 2, \ldots, d_{l+1}\cdots d_m$. If for each $j$ we denote the normalized counting measure on the roots of $F_l(z)-\beta^j$ by $\lambda_j$, then by \eqref{newton} and \eqref{invar}, this leads to
		\begin{align*}
			\int x^k\, d\nu_{m}^a 
			&= \frac{1}{d_{l+1}\cdots d_m}\sum_{j=1}^{d_{l+1}\cdots d_m}\int\,x^k\,d\lambda_j\\
			&=\frac{1}{d_{l+1}\cdots d_m}\sum_{j=1}^{d_{l+1}\cdots d_m}\frac{s_k(F_l(z)-\beta^j)}{d_1\cdots d_l}\\
			&=\frac{1}{d_{l+1}\cdots d_m}\sum_{j=1}^{d_{l+1}\cdots d_m}\frac{s_k(F_l(z))}{d_1\cdots d_l} =\frac{s_k(F_l(z))}{d_1\cdots d_l},
		\end{align*}
		for $k=1, 2 \ldots, (d_1 d_2\cdots d_l)-1$.  Since the weak star limit of the sequence $(d\nu_{m}^a)$ is the equilibrium measure of the Julia set by Theorem \ref{weak}, we have $\int x^k d\mu_{J_{(f_n)}}= \frac{s_k(F_l(z))}{d_1\ldots d_l}$ which concludes the proof.
	\end{proof}
	
	In Sections 3-5 of \cite{alpgon}, orthogonal polynomials and recurrence coefficients are discussed for the quadratic case. It would be interesting to obtain similar results for $\mu_{J_{(f_n)}}$ if we only assume that $(f_n)\in\mathcal{R}$ and $J_{(f_n)}\subset \mathbb{R}$.
	
	For two bounded sequences $(a_n)_{n=1}^\infty$ and $(b_n)_{n=1}^\infty$ with $a_n>0$ and $b_n\in\mathbb{R}$ for $n\in\mathbb{N}$, the associated (half-line) Jacobi operator $H: \ell ^2(\mathbb{N})\rightarrow \ell ^2(\mathbb{N})$ is given by $(Hu)_n=a_n u_{n+1}+ b_n u_n + a_{n-1} u_{n-1}$ for $u\in \ell ^2(\mathbb{N})$ and $a_0:=0$. Here, $\ell ^2(\mathbb{N})$ denotes the space of square summable sequences in $\mathbb{N}$. The spectral measure of $H$ for the cyclic vector $\delta_1=(1,0,0,\ldots)^T$ is just the one which has $a_n, b_n$ $(n=1,2\ldots)$ as the recurrence coefficients. 
	
	Let $J_{(f_n)}\subset [-M,M]$ for some $M\in\mathbb{R}$ where $(f_n)\in\mathcal{R}$. If we denote the Jacobi operator associated with $\mu_{J_{(f_n)}}$ by $H_{(f_n)}$ then the \emph{resolvent function} $R_{(f_n)}$ is defined as 
	\begin{equation*}
		R_{(f_n)}(z):= \bigintsss \frac{d\, \mu_{J_{(f_n)}}(x)}{x-z} = \langle (H_{(f_n)}-z)^{-1} \delta_1, \delta_1\rangle
	\end{equation*}
	for $z\in\mathbb{C}\setminus J_{(f_n)}$. Note that $R_{(f_n)}$ is an analytic function. If $f_n=f$ for a nonlinear polynomial $f$ for all $n\in\mathbb{N}$ then the resolvent function satisfies a functional equation:
	\begin{equation}\label{55}
		R_{(f)}(z)= \frac{f^{\prime}(z)}{\deg f} R_{(f)}(f(z)).
	\end{equation}
	
	See e.g. \cite{bes2} for a discussion of resolvent functions and operators associated with the equilibrium measure of autonomous polynomial Julia sets. It is well known that (see e.g. p. 53 in \cite{Sim3}) for $z\in\mathbb{C}\setminus \overline{D_M(0)}$ 
	\begin{equation}\label{33}
		R_{(f_n)}(z) = -\sum_{n=0}^\infty c_n z^{-(n+1)}
	\end{equation}
	where $c_n$ is the $n$-th moment for $\mu_{J_{(f_n)}}$, $D_M(0)$ is the open ball centered at $0$ with radius $M$ in $\mathbb{C}$ and the series at \eqref{33} is absolutely convergent in the corresponding domain. 
	
	We define the $\partial$ operator as 
	\begin{equation*}
		\partial= \frac{\partial_x - i \partial_y}{2}.
	\end{equation*}
	If $g$ is a harmonic function on a simply connected domain $D\subset\mathbb{C}$ then (see e.g. Theorem 1.1.2 in \cite{Ransford}) there is an analytic function $h$ on $D$ such that $g= \mathrm{Re} \,h$ holds. Moreover, we have $h^\prime (z)= 2 \partial g(z)$. Furthermore, 
	\begin{equation*}
		G_{\overline{\mathbb{C}}\setminus J_{(f_n)}}(z)= \log{(\mathrm{Cap}(J_{(f_n)})^{-1})}- U_{\mu_{J_{(f_n)}}}(z)
	\end{equation*}
	holds where $U_{\mu_{J_{(f_n)}}}$ is the logarithmic potential for $\mu_{J_{(f_n)}}$. In addition, for each $z_0\in\mathbb{C}\setminus J_{(f_n)}$, there is a $\delta>0$ and an analytic function $h$ (which may depend on $z_0$) such that (see e.g. p. 87 in \cite{finkel}) $h^\prime(z)= R_{(f_n)}(z)$ and $\mathrm{Re}\, h= U_{\mu_{J_{(f_n)}}}$ for $z\in D_\delta(z_0)$. By harmonicity of $U_{\mu_{J_{(f_n)}}}$ this implies  
	\begin{equation}\label{77}
		2\partial G_{\overline{\mathbb{C}}\setminus J_{(f_n)}}(z)= -2\partial U_{\mu_{J_{(f_n)}}}(z)= -R_{(f_n)}(z)
	\end{equation}
	
	for all $z\in\mathbb{C}\setminus J_{(f_n)}$. The next theorem follows from the discussion above. 
	
	\begin{theorem}\label{ttt}
		Let $J_{(f_n)}\subset \mathbb{R}$ provided that $(f_n)\in\mathcal{R}$. Then the following functional equation holds where the limit exists locally uniformly in $\mathbb{C}\setminus J_{(f_n)}$:
		\begin{equation}\label{88}
			R_{(f_n)}(z) = \lim_{k\rightarrow\infty} \frac{R_{(f_n)}(F_k(z)) F_k^\prime (z)}{d_1\cdots d_k}.
		\end{equation}
	\end{theorem}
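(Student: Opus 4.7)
The plan is to differentiate the Green-function identity \eqref{22} via the Wirtinger operator $\partial$ and translate the result into an identity for $R_{(f_n)}$ using \eqref{77}. On the left-hand side, \eqref{77} gives $2\partial G_{\overline{\mathbb{C}}\setminus J_{(f_n)}}(z) = -R_{(f_n)}(z)$. For the right-hand side, $F_k$ is a polynomial and hence holomorphic, so $\partial_{\bar z}F_k\equiv 0$ and the Wirtinger chain rule reduces to the usual complex chain rule:
\begin{equation*}
2\partial\!\left[\frac{G_{\overline{\mathbb{C}}\setminus J_{(f_n)}}(F_k(z))}{d_1\cdots d_k}\right] = \frac{F_k'(z)}{d_1\cdots d_k}\bigl(2\partial G_{\overline{\mathbb{C}}\setminus J_{(f_n)}}\bigr)(F_k(z)) = -\frac{R_{(f_n)}(F_k(z))\,F_k'(z)}{d_1\cdots d_k},
\end{equation*}
where the second equality applies \eqref{77} at the point $F_k(z)$. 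This is legitimate for all sufficiently large $k$ because by Theorem \ref{general}(a), $F_k(z)\to\infty$ locally uniformly on $\mathcal{A}_{(f_n)}(\infty)$, so $F_k(z)$ eventually lies in $\triangle_R\subset\mathbb{C}\setminus J_{(f_n)}$.

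The substantive point is exchanging the operator $2\partial$ with the limit in \eqref{22}. Fix a compact $K\subset\mathcal{A}_{(f_n)}(\infty)$ and an open neighborhood $U$ of $K$ with $\overline{U}\subset\mathcal{A}_{(f_n)}(\infty)$. By Theorem \ref{general}(a), $F_k(\overline{U})\subset\triangle_R$ for all large $k$, so on $U$ the pre-limit functions $\tfrac{1}{d_1\cdots d_k}G_{\overline{\mathbb{C}}\setminus J_{(f_n)}}(F_k(z))$ are harmonic, being the composition of the holomorphic $F_k$ with a function harmonic on its image. By \eqref{22} they converge uniformly on $\overline{U}$ to $G_{\overline{\mathbb{C}}\setminus J_{(f_n)}}$. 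A standard consequence of the Poisson integral representation is that locally uniform convergence of harmonic functions implies locally uniform convergence of every partial derivative; taking the Wirtinger combination $\partial=(\partial_x-i\partial_y)/2$ and combining with the computation of the previous paragraph yields \eqref{88} locally uniformly on $\mathcal{A}_{(f_n)}(\infty)$.

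To conclude that \eqref{88} holds locally uniformly on all of $\mathbb{C}\setminus J_{(f_n)}$ one uses that, since $J_{(f_n)}\subset\mathbb{R}$, there are no bounded components of $\mathbb{C}\setminus J_{(f_n)}$; consequently $\mathbb{C}\setminus J_{(f_n)}=\mathcal{A}_{(f_n)}(\infty)$ and the claim follows. The main obstacle is the derivative-limit exchange in the second paragraph, which is handled cleanly once the pre-limit functions are identified as harmonic on a common domain; the rest is the chain-rule bookkeeping enabled by holomorphy of $F_k$.
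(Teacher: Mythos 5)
Your proposal is correct and follows essentially the same route as the paper: apply the Wirtinger operator $\partial$ to \eqref{22}, justify exchanging differentiation and limit via harmonicity and locally uniform convergence, use the holomorphic chain rule for $F_k$, translate with \eqref{77}, and conclude by identifying $\mathbb{C}\setminus J_{(f_n)}$ with $\mathcal{A}_{(f_n)}(\infty)\setminus\{\infty\}$ since $J_{(f_n)}$ is real. The only difference is that you spell out the harmonicity of the \emph{pre-limit} functions and the Poisson-kernel justification explicitly, whereas the paper simply cites Axler--Bourdon--Ramey; your version is slightly more careful but substantively the same.
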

	\begin{proof}
		If we apply $\partial$ to both sides of \eqref{22}, it is permitted to change the differentiation and limit since (see e.g. p. 16 in \cite{axler}) $G_{\overline{\mathbb{C}}\setminus J_{(f_n)}}$ is harmonic in $\mathcal{A}_{(f_n)}(\infty)\setminus \infty$. Note that $\mathcal{A}_{(f_n)}(\infty)\setminus \infty=\mathbb{C}\setminus J_{(f_n)}$ here since $J_{(f_n)}$ lies on $\mathbb{R}$. Hence, we have 
		\begin{equation}\label{66}
			\partial G_{\overline{\mathbb{C}}\setminus J_{(f_n)}}(z)= \lim_{k\rightarrow\infty} \frac{\partial G_{\overline{\mathbb{C}}\setminus J_{(f_n)}}(F_k(z)) F_k^\prime (z)}{d_1\cdots d_k}
		\end{equation}
		where the limit on the right side of \eqref{66} holds locally uniformly. Using \eqref{77} and \eqref{66}, we have \eqref{88} immediately.
	\end{proof}
	\begin{remark}
		Provided that $f_n=f$ for a fixed nonlinear polynomial $f$ in Theorem \ref{ttt}, \eqref{88} reduces to \eqref{55} if we put $f(z)$ instead of $z$ in both sides of \eqref{22} and follow the steps of the proof of Theorem \ref{ttt}. 
	\end{remark}
	
	\section{Construction of real Julia sets}
	Let $f$ be a nonlinear real polynomial with real and simple zeros $x_1<x_2<\ldots<x_n$ and distinct extremas $y_1<\ldots<y_{n-1}$ with $|f(y_i)|>1$ for $i=1,2, \ldots, n-1$. Then we say that $f$ is an \emph{admissible} polynomial. Note that in the literature the last condition is usually given as $|f(y_i)|\geq 1$. We list useful features of preimages of admissible polynomials.
	
	\begin{theorem}\cite{van assche}\label{inv} Let $f$ be an admissible polynomial of degree $n$. Then $$f^{-1}([-1,1])=\cup_{i=1}^n E_i$$ where $E_i$ is a closed non-degenerate interval containing exactly one root $x_i$ of $f$ for each $i$. These intervals are pairwise disjoint and $\mu_{f^{-1}([-1,1])}(E_i)=1/n$.
	\end{theorem}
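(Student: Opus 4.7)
My plan is to split the statement into the topological description of $f^{-1}([-1,1])$ and the mass computation, and prove the two parts separately.

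First I would exploit the interlacing of roots and critical points. Since $f$ has $n$ simple real zeros, Rolle's theorem gives at least one critical point strictly between each consecutive pair $x_i, x_{i+1}$, producing $n-1$ critical points; as $f'$ is a polynomial of degree $n-1$, these exhaust the $y_j$. Therefore $x_1<y_1<x_2<y_2<\cdots<y_{n-1}<x_n$, and $f$ is strictly monotone on each of the $n$ closed pieces $I_1=(-\infty,y_1]$, $I_i=[y_{i-1},y_i]$ for $2\le i\le n-1$, and $I_n=[y_{n-1},\infty)$, with $x_i\in I_i^\circ$ for every $i$.

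Next, on each $I_i$ the restriction $f|_{I_i}$ is a homeomorphism onto its image. Because $f(x_i)=0\in[-1,1]$ and $|f|>1$ at each finite endpoint of $I_i$ (i.e.\ at every $y_j$ by assumption) as well as at infinity, the intermediate value theorem together with monotonicity forces $f^{-1}([-1,1])\cap I_i$ to be a closed non-degenerate interval $E_i$ sitting in the interior of $I_i$, mapped bijectively by $f$ onto $[-1,1]$ and containing the zero $x_i$. The intervals $E_i$ are pairwise disjoint because the endpoints $y_j$ separating the $I_i$ are excluded from $f^{-1}([-1,1])$ thanks to $|f(y_j)|>1$. This proves the first part.

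For the equilibrium measure claim I would rely on the pullback identity from potential theory. Set $K=f^{-1}([-1,1])$. The function $\frac{1}{n}G_{\overline{\mathbb{C}}\setminus[-1,1]}(f(z))$ is nonnegative and harmonic on $\overline{\mathbb{C}}\setminus K$, vanishes on $K$ (since $[-1,1]$ is regular), and has the correct logarithmic pole $\log|z|+O(1)$ at infinity because $f(z)\sim a_n z^n$. By uniqueness of the Green function,
\begin{equation*}
G_{\overline{\mathbb{C}}\setminus K}(z)=\frac{1}{n}\,G_{\overline{\mathbb{C}}\setminus[-1,1]}(f(z)).
\end{equation*}
Taking distributional Laplacians, or equivalently computing normal derivatives along the two sides of each $E_i$ and using that $f$ is a homeomorphism from $E_i$ onto $[-1,1]$, gives the pullback of measures $\mu_K|_{E_i}=\frac{1}{n}(f|_{E_i})^{*}\mu_{[-1,1]}$. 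Integrating over $E_i$ yields $\mu_K(E_i)=\frac{1}{n}\mu_{[-1,1]}([-1,1])=\frac{1}{n}$.

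The main obstacle will be formulating the pullback of the equilibrium measure cleanly enough to extract the mass $1/n$ without a long excursion into potential theory; I expect to sidestep this by citing a standard reference (e.g.\ Ransford) for the Green-function identity above and the ensuing measure-theoretic pullback, rather than reproducing the Laplacian calculation in detail.
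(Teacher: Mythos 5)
The paper does not actually prove this theorem; it is quoted from the reference \cite{van assche} (Geronimo and Van Assche) and used as a known ingredient. So there is no ``paper proof'' to compare against word for word. That said, your reconstruction is correct and is essentially the standard argument one finds in the literature on polynomial preimages of intervals.

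Your topological part is clean: the interlacing $x_1<y_1<\cdots<y_{n-1}<x_n$ from Rolle plus the degree count on $f'$, monotonicity of $f$ on each $I_i$, the values $f(x_i)=0$ and $|f(y_j)|>1$, and $|f|\to\infty$ at $\pm\infty$ together force $f^{-1}([-1,1])\cap I_i$ to be a single non-degenerate closed interval $E_i$ in the interior of $I_i$ on which $f$ is a homeomorphism onto $[-1,1]$; disjointness follows because the separating critical points lie outside $f^{-1}([-1,1])$. For the mass computation, the Green function pullback identity $G_{\overline{\mathbb{C}}\setminus K}=\tfrac1n\,G_{\overline{\mathbb{C}}\setminus[-1,1]}\circ f$, verified by harmonicity, boundary regularity, and matching the logarithmic pole at infinity, is exactly the right tool; taking Riesz measures (equivalently normal derivatives along the two sides of each $E_i$) and using that $f|_{E_i}$ is a homeomorphism onto $[-1,1]$ gives $\mu_K|_{E_i}=\tfrac1n (f|_{E_i})^{-1}_{*}\mu_{[-1,1]}$, hence $\mu_K(E_i)=1/n$. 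This is the same strategy used by Geronimo and Van Assche (and it also underlies the equidistribution phenomena exploited later in the paper), so your proposal is both correct and in line with the cited source.
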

	
	We say that an admissible polynomial $f$ satisfies {\it the property} $(A)$ if 
	\begin{enumerate}[label={(\alph*})]
		\item $f^{-1}([-1,1])\subset [-1,1],$
		\item $f(\{-1,1\})\subset \{-1,1\}$,
		\item $f(a)=0$ implies $f(-a)=0$.
	\end{enumerate}
	
	Clearly, $(c)$ implies that $f$ is even or odd.
	
	\begin{lemma}\label{admlem}
		Let $g_1$ and $g_2$  be admissible polynomials satisfying  $(A)$.
		Then $g_3:=g_2\circ g_1$ is also an admissible polynomial that satisfies  $(A).$
	\end{lemma}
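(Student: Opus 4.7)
The plan is to verify property $(A)$ first as a direct set-theoretic chase, and then use the factorization of $g_3'$ to recover admissibility. For $(A)$: condition (a) is $g_3^{-1}([-1,1]) = g_1^{-1}(g_2^{-1}([-1,1])) \subset g_1^{-1}([-1,1]) \subset [-1,1]$, and (b) is $g_3(\{-1,1\}) \subset g_2(\{-1,1\}) \subset \{-1,1\}$. For (c), I would first note that (c) for $g_1$ forces the zero set of $g_1$ to be symmetric about $0$, so $g_1$ is even or odd and $g_1(-a) = \pm g_1(a)$; then $g_3(a)=0$ means $g_1(a)$ is a root of $g_2$, hence by (c) for $g_2$ so is $-g_1(a)$, and therefore $g_3(-a) = g_2(\pm g_1(a)) = 0$.

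For admissibility I would exploit the factorization $g_3'(z) = g_2'(g_1(z))\, g_1'(z)$, whose total degree $n_1 n_2 - 1$ (with $n_i = \deg g_i$) matches $\deg g_3'$. The $n_1 - 1$ zeros of $g_1'$ are the extrema $y_i$ of $g_1$, at which $|g_1(y_i)| > 1$; the contrapositive of property (a) for $g_2$ reads $|w| > 1 \Rightarrow |g_2(w)| > 1$, giving $|g_3(y_i)| > 1$. The $(n_2 - 1) n_1$ zeros of $g_2'(g_1(z))$ are the $g_1$-preimages of the extrema $y_j'$ of $g_2$. The key observation here is that $y_j' \in (-1,1)$ for every $j$: properties (a) and (b) of $g_2$ confine the zeros of $g_2$ to $(-1,1)$ (they cannot hit $\pm 1$ by (b)), and by Rolle's theorem the extrema of $g_2$ interlace with its zeros and so lie in the same open interval. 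Applying Theorem \ref{inv} to $g_1$, each $y_j'$ has exactly $n_1$ real preimages, one in the interior of each interval $E_i$ where $g_1'$ is nonzero --- so these are simple zeros of $g_3'$, and at each such $z$ one has $|g_3(z)| = |g_2(y_j')| > 1$.

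It remains to verify that the two families of critical points are disjoint (so that together they exhaust the $n_1 n_2 - 1$ zeros of $g_3'$) and that the zeros of $g_3$ are simple. If some $y_i$ were a $g_1$-preimage of some $y_j'$, then $g_1(y_i) = y_j'$ would yield $|y_j'| = |g_1(y_i)| > 1$, contradicting $y_j' \in (-1,1)$; distinctness among preimages coming from different $y_j'$ is automatic since $g_1$ is a function. For the zeros of $g_3$, the same confinement applies with the roots $x_j' \in (-1,1)$ of $g_2$ replacing the $y_j'$, and since their $g_1$-preimages land in the interior of the $E_i$'s, neither $g_1'$ nor $g_2' \circ g_1$ vanishes there. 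The one subtle point I anticipate is the confinement of the $y_j'$ to $(-1,1)$ --- once that is established via Rolle together with (a) and (b) for $g_2$, the rest is chain-rule bookkeeping.
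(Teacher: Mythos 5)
Your proof is correct and follows the same strategy as the paper's: factor $g_3'$ via the chain rule, split the critical points of $g_3$ into the critical points of $g_1$ and the $g_1$-preimages of the critical points of $g_2$, count them to get $n_1n_2-1$ distinct real points, and verify property $(A)$ by direct composition (with $(c)$ coming from the even/odd structure). The paper is somewhat terser than you about two of the steps you flag --- the confinement of the critical points of $g_2$ to $(-1,1)$ via Rolle, and the explicit check that $|g_3|>1$ at each critical point --- so your write-up is, if anything, a more complete rendering of the same argument.
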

	\begin{proof}
		Let $\deg g_k= n_k$. Moreover, let $(x_{j,1})_{j=1}^{n_1}$, $(x_{j,2})_{j=1}^{n_2}$ be the zeros and $(y_{j,1})_{j=1}^{n_1-1}$ and $(y_{j,2})_{j=1}^{n_2-1}$ be the critical points of $g_1,\,\,g_2$ respectively. Then the equation $g_3(z)=0$ implies that $g_1(z)=x_{j,2}$ for some $j\in\{1,\ldots,n_2\}$. By $(a)$ and $(b)$, the equation $g_1(z)=\beta$ has $n_1$ distinct roots for $|\beta|\leq 1$ and the sets of roots of $g_1(z)=\beta_1$ and $g_1(z)=\beta_2$ are disjoint for different $\beta_1,\beta_2\in[-1,1]$. Therefore, $g_3$ has $n_1 n_2$ distinct zeros. Similarly, $(g_3)^\prime (z)=g_2^\prime(g_1(z))g_1^\prime (z)=0$ implies $g_1^\prime (z)=0$ or $g_1(z)=y_{j,2}$ for some $j\in\{1,\ldots, n_2-1\}$. The equation $g_1^\prime (z)=0$ has $n_1-1$ distinct solutions in $(-1,1)$. For each of them $|g_1(z)|>1$ and ${g_2}^\prime(g_1(z))\neq 0$. On the other hand, for each $j\leq n_2-1$, the equation $g_1(z)=y_{j,2}$ has $n_1$ distinct solutions with ${g_1}^\prime(y_{j,2})\neq 0$. Thus, the total number of solutions for the equation ${g_3}^\prime(z)=0$ is $n_1-1+n_1(n_2-1)=n_1n_2-1$ which is required. Hence, $g_3$ is admissible. It is straightforward that for the function $g_3$ parts $(a)$ and $(b)$ are satisfied. The part $(c)$ is also satisfied for $g_3$, since arbitrary compositions of even and odd functions are either even or odd.
	\end{proof}
	\begin{lemma}\label{adm2}
		Let $(f_n)\in\mathcal{R}$ be a sequence of admissible polynomials satisfying  $(A).$
		Then $F_n$ is an admissible polynomial with the property  $(A).$
		Besides, $F_{n+1}^{-1}([-1,1])\subset F_{n}^{-1}([-1,1])\subset [-1,1]$ and
		$K=\cap_{n=1}^\infty F_{n}^{-1}([-1,1])$ is a Cantor set in $[-1,1]$.
	\end{lemma}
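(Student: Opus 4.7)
The plan establishes the three assertions in sequence, where the first two are formal and the Cantor property requires the substantive argument.

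First, induction on $n$ shows $F_n$ is admissible with $(A)$: the base case $F_1 = f_1$ is the hypothesis, and the inductive step applies Lemma \ref{admlem} to $g_1 = F_n$ and $g_2 = f_{n+1}$ in $F_{n+1} = f_{n+1} \circ F_n$. The chain of inclusions follows: $F_n^{-1}([-1,1]) \subset [-1,1]$ is property $(A)(a)$ for $F_n$ (just proved), while $F_{n+1}^{-1}([-1,1]) \subset F_n^{-1}([-1,1])$ is immediate, since $F_{n+1}(x) = f_{n+1}(F_n(x)) \in [-1,1]$ forces $F_n(x) \in f_{n+1}^{-1}([-1,1]) \subset [-1,1]$ by $(A)(a)$ for $f_{n+1}$.

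For the Cantor structure: Theorem \ref{inv} applied to the admissible polynomial $F_n$ expresses $F_n^{-1}([-1,1])$ as a disjoint union of $d_1\cdots d_n$ closed intervals $E_i^{(n)}$, and the nesting refines each $E_i^{(n)}$ into $d_{n+1} \geq 2$ subintervals separated by gaps containing critical points of $F_{n+1}$ where $|F_{n+1}| > 1$. Non-emptiness of $K$ follows from $1 \in K$ by $(A)(b)$; indeed every point of $F_n^{-1}(\{-1,1\})$ belongs to $K$ by iteration of $(A)(b)$, so each $E_i^{(n)}$ contributes its two endpoints to $K$. Compactness is automatic. Both perfectness and total disconnectedness then reduce to showing the diameters satisfy $\max_i |E_i^{(n)}| \to 0$: once this holds, the connected component of each $x \in K$, namely $\bigcap_n E_{i(n)}^{(n)}$, degenerates to $\{x\}$, and the endpoints of the shrinking components yield $K$-points converging to $x$.

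The main obstacle is establishing this diameter shrinking. I would argue by contradiction: if some $[a,b] \subset K$ with $a<b$ existed, then $[a,b]$ would lie in a single $E_{i(n)}^{(n)}$ for every $n$, so $\ell_n := |F_n([a,b])|$ would be bounded in $(0, 2]$. Writing $F_{n+1} = f_{n+1} \circ F_n$ and using that $F_n([a,b])$ is connected, $F_n([a,b])$ lies inside a single component of $f_{n+1}^{-1}([-1,1])$ on which $f_{n+1}$ is a monotone bijection onto $[-1,1]$ with nonvanishing derivative (critical points of $f_{n+1}$ are confined to the gaps, where $|f_{n+1}| > 1$, by admissibility). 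One then obtains $\ell_{n+1} \geq \lambda_{n+1}\,\ell_n$ for suitable expansion factors extracted from the derivatives on these components; using the $\mathcal{R}$-regularity and the boundary normalization $f_n(\pm 1) \in \{-1,1\}$, one shows $\prod_k \lambda_k = \infty$, producing $\ell_n > 2$ eventually, contradicting $\ell_n \leq 2$. Alternatively, one can invoke a potential-theoretic argument: by the weak convergence of $\mu_{F_n^{-1}([-1,1])}$ to $\mu_K$ combined with $\mu_{F_n^{-1}([-1,1])}(E_i^{(n)}) = 1/(d_1\cdots d_n) \to 0$, one obtains $\mu_K([a,b]) = 0$, contradicting $\mathrm{supp}(\mu_K) = K \supset [a,b]$. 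Either route delivers the required shrinking and hence the Cantor property.
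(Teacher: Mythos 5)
Your Route 2 is essentially the paper's own argument: assume $[a,b]\subset K$, observe that $[a,b]$ sits inside a single $E_i^{(n)}$ at every level so that $\mu_{F_n^{-1}([-1,1])}([a,b])\leq 1/(d_1\cdots d_n)\to 0$ (Theorem~\ref{inv}), and combine with weak convergence $\mu_{F_n^{-1}([-1,1])}\to\mu_K$ and $\mathrm{supp}(\mu_K)=K$ to reach a contradiction. Two caveats. First, the paper begins by splitting into the cases $K$ polar and $K$ non-polar, because both ingredients you invoke — the weak convergence of equilibrium measures (Theorem~A.16 of \cite{simon1}) and the identity $\mathrm{supp}(\mu_K)=K$ (Theorem~4.2.3 of \cite{Ransford}, using $K=\partial(\overline{\mathbb{C}}\setminus K)$) — only make sense when $K$ is non-polar. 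Your contradiction structure actually handles this for free, since $[a,b]\subset K$ gives $\mathrm{Cap}(K)\geq\mathrm{Cap}([a,b])>0$, but you should say so explicitly and should cite the support identity rather than treat it as obvious. Also be a little careful with the passage from weak convergence to ``$\mu_K([a,b])=0$'': the portmanteau argument gives vanishing on the open interval $(a,b)$, which is what you need to contradict $(a,b)\subset\mathrm{supp}(\mu_K)$. Second, your Route 1 (extracting expansion factors $\lambda_k$ with $\prod_k\lambda_k=\infty$) is not substantiated; controlling $|f_{n+1}'|$ from below on the relevant branch is a genuine Markov-type estimate and the claim that the product diverges is not clear, so you should not present it as a finished alternative. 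On the positive side, your observation that perfectness follows from the diameter shrinking together with the fact that the endpoints of every $E_i^{(n)}$ lie in $K$ (since $F_n(x)\in\{-1,1\}$ and iterating $(A)(b)$ keeps all further images in $\{-1,1\}$) is a useful supplement: the paper's proof explicitly addresses only total disconnectedness and leaves perfectness implicit.
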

	\begin{proof}
		All statements except the last one follow directly from Lemma \ref{admlem} and the representation $F_n(z)=f_n\circ F_{n-1}(z)$. Let us show that $K$ is totally disconnected.
		
		If $K$ is polar then (see e.g. Corollary 3.8.5. of \cite{Ransford}) it is totally disconnected.
		If $K$ is non-polar, then (see e.g. Theorem A.16. of \cite{simon1}), $\mu_{F_{n}^{-1}([-1,1])}\rightarrow \mu_K$. Suppose that $K$ is not totally disconnected. Then $K$ contains an interval $E$ such that $E\subset F_{n}^{-1}([-1,1])$ for all $n$. Since we have $\mu_{F_{n}^{-1}([-1,1])}(E)\leq 1/(d_1\ldots d_n)$ by Theorem \ref{inv}, convergence of $(\mu_{F_{n}^{-1}([-1,1])})$ implies that $\mu_K(E)=0$. Thus all interior points of $E$ in $\mathbb{R}$ are outside of the support of $\mu_K$. This is impossible by Theorem 4.2.3. of \cite{Ransford} since $K=\partial(\overline{\mathbb{C}}\setminus K)$ and $\mathrm{Cap}(E)>0$. 
	\end{proof}
	
	Here we consider admissible polynomials as polynomials of complex variable.
	\begin{lemma}\label{eps}
		Let $f$ be an admissible polynomial satisfying  $(A).$ Then $|f(z)|>1+2\epsilon$ provided  $|z|>1+\epsilon$ for $\epsilon>0$. If $|z|=1$ then $|f(z)|>1$ unless $z=\pm 1$.
	\end{lemma}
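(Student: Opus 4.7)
The plan is to exploit the symmetry imposed by $(c)$ in order to write $f$ as a polynomial in $z^2$ (up to a possible factor of $z$), after which lower bounds on $|f|$ can be compared with the normalization $|f(\pm 1)|=1$ furnished by $(b)$.

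First I would derive the factored form. The zeros $x_1<\cdots<x_n$ of $f$ lie in $f^{-1}([-1,1])\subset[-1,1]$ and, by $(b)$, differ from $\pm 1$, so $x_i\in(-1,1)$. By $(c)$ the set $\{x_i\}$ is invariant under $x\mapsto -x$, hence $f$ is either even or odd; in the even case $0$ is not a root, otherwise it would be a multiple one. Consequently
\[
f(z)=c\prod_{j=1}^{m}(z^2-a_j)\quad\text{or}\quad f(z)=c\,z\prod_{j=1}^{m}(z^2-a_j),
\]
with $a_j\in(0,1)$ and $m\geq 1$ (since $\deg f\geq 2$). Property $(b)$ then gives $|f(1)|=1$, i.e.\ $|c|\prod_j(1-a_j)=1$.

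For the first claim, fix $z$ with $|z|>1+\epsilon$. Since $a_j<1<|z|^2$, the reverse triangle inequality yields $|z^2-a_j|\geq |z|^2-a_j>(1+\epsilon)^2-a_j$. In the even case this gives
\[
|f(z)|>|c|\prod_{j=1}^{m}\bigl((1+\epsilon)^2-a_j\bigr)=\prod_{j=1}^{m}\frac{(1+\epsilon)^2-a_j}{1-a_j}=\prod_{j=1}^{m}\left(1+\frac{2\epsilon+\epsilon^2}{1-a_j}\right).
\]
Each factor is at least $1+2\epsilon+\epsilon^2$ because $1-a_j\leq 1$, and $m\geq 1$, so $|f(z)|>1+2\epsilon$. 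In the odd case an additional factor $|z|>1+\epsilon$ only strengthens the estimate.

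For the second claim, suppose $|z|=1$ and $z\neq\pm 1$, and write $z=x+iy$ with $x^2+y^2=1$. A direct computation gives
\[
|z^2-a_j|^2=(1-a_j)^2+4a_j(1-x^2),
\]
so $|z^2-a_j|>1-a_j$ strictly, since $x^2<1$ and $a_j>0$. Multiplying these estimates (together with $|z|=1$ in the odd case) yields $|f(z)|>|c|\prod_j(1-a_j)=1$. The only delicate point is the clean handling of the even/odd dichotomy and the identification of the normalization $|c|\prod_j(1-a_j)=1$; once those are in place both bounds reduce to elementary estimates on $|z^2-a_j|$.
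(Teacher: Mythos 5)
Your proof is correct and follows essentially the same approach as the paper's: both arguments reduce to estimating the modulus of each quadratic factor $z^2 - x_i^2$ (plus a possible factor $z$ in the odd case), normalizing by the value at $\pm 1$ which is forced to have modulus $1$ by property $(b)$. The paper phrases the per-factor estimate via the law of cosines applied to $|z^2-x_i^2|$ on a circle, while you use the reverse triangle inequality for $|z|>1+\epsilon$ and a direct algebraic identity for $|z|=1$; these are the same computation in different clothes.
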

	\begin{proof}
		Let $\deg f= n$ and $x_1<x_2<\ldots<x_n$ be the zeros of $f$. By $(c),\, x_k=-x_{n+1-k}$ for $k\leq n.$
		In particular, if $n$ is odd, then $x_{(n+1)/2}=0.$
		
		Let  $x_i\neq 0$  and  $\epsilon>0$. Then, by the law of cosines, the polynomial $P_{x_i}(z):=z^2-x_i^2$
		attains minimum of its modulus on the set $\{z: |z|=1+\epsilon\}$ at the point $z=1+\epsilon.$
		Therefore $|P_{x_i}(z)|/|P_{x_i}(\pm 1)|>1+2\epsilon$ for any $z$ with $|z|=1+\epsilon$. Using the symmetry of the roots of $f$ about $x=0$, we see that $|f(z)|=|f(z)/|f(\pm 1)|>1+2\epsilon$ for such $z$.
		
		If $|z|=1$ then $|P_{x_i}(z)|$ attains its minimum  at the points $\pm 1.$ Hence we have $|f(z)|=|f(z)|/|f(\pm 1)|>1$ if $|z|=1$ and $z\neq \pm 1$. 
	\end{proof}
	
	In the next theorem we use the argument of Theorem 1 in \cite{gonc}.
	\begin{theorem}\label{zek}
		Let $(f_n)\in\mathcal{R}$ be a sequence of admissible polynomials satisfying $(A)$. Then $K=\cap_{n=1}^\infty F_n^{-1}([-1,1])=J_{(f_n)}$.
	\end{theorem}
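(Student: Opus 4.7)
The plan is to prove both inclusions $K \subset J_{(f_n)}$ and $J_{(f_n)} \subset K$ separately, using $J_{(f_n)} = \partial \mathcal{A}_{(f_n)}(\infty)$ from Theorem \ref{general}(b).

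For $K \subset J_{(f_n)}$: if $z \in K$ then $F_n(z) \in [-1,1]$ for every $n$, so the orbit is bounded and $z \notin \mathcal{A}_{(f_n)}(\infty)$. To see that $z \in \partial \mathcal{A}_{(f_n)}(\infty)$, I use that $K$ is a Cantor subset of $[-1,1]$ by Lemma \ref{adm2}. Every real neighbourhood of $z$ contains some $z' \in [-1,1]\setminus K$; for any such $z'$ there is $N$ with $F_N(z') \notin [-1,1]$, and because $z'$ is real this forces $|F_N(z')|>1$. Continuity of $F_N$ and iteration of Lemma \ref{eps} then give $|F_{N+k}(w)| \to \infty$ uniformly on a small disc about $z'$, so $z' \in \mathcal{A}_{(f_n)}(\infty)$, exhibiting $z$ as a boundary point.

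For $J_{(f_n)} \subset K$ I will prove the stronger claim $\mathbb{C}\setminus K \subset \mathcal{A}_{(f_n)}(\infty)$ via a Green function estimate that uses property $(A)(a)$ in an essential way. Write $G := G_{\overline{\mathbb{C}}\setminus[-1,1]}$, which is $>0$ precisely off $[-1,1]$, and set $E_n := f_n^{-1}([-1,1]) \subset [-1,1]$. The function $z \mapsto \tfrac{1}{d_n} G(f_n(z))$ is harmonic on $\mathbb{C}\setminus E_n$, vanishes on $E_n$, and behaves as $\log|z|+O(1)$ at infinity, so by uniqueness it equals $G_{\overline{\mathbb{C}}\setminus E_n}$. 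Monotonicity of the Green function in the complementary compact set (a standard minimum principle argument applied to $G_{\overline{\mathbb{C}}\setminus E_n}-G$ on $\overline{\mathbb{C}}\setminus[-1,1]$) gives $G_{\overline{\mathbb{C}}\setminus E_n} \geq G$ pointwise, hence
\[
G(f_n(z)) \geq d_n\, G(z) \qquad (z \in \mathbb{C}).
\]
Iterating along the sequence yields $G(F_n(z)) \geq d_{N+1}\cdots d_n\, G(F_N(z))$ for all $n \geq N$. Now fix $z_0 \in \mathbb{C}\setminus K$: some $N$ has $F_N(z_0) \notin [-1,1]$, so by continuity there is a neighbourhood $U$ of $z_0$ whose image $F_N(U)$ lies in a compact subset of $\mathbb{C}\setminus[-1,1]$ on which $G \geq g_0 > 0$. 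The iterated inequality then gives $G(F_n(z)) \geq d_{N+1}\cdots d_n\, g_0 \to \infty$ uniformly in $z \in U$; since $G(w)\to\infty$ is equivalent to $|w|\to\infty$, we obtain $F_n \to \infty$ locally uniformly at $z_0$, so $z_0 \in \mathcal{A}_{(f_n)}(\infty)$ by Theorem \ref{general}(b).

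The only genuine obstruction lies in the second inclusion for non-real $z_0$ with $|z_0| \leq 1$, where Lemma \ref{eps} alone cannot force escape. The Green function comparison sidesteps this by converting the strict containment $f_n^{-1}([-1,1])\subset[-1,1]$ into a multiplicative growth statement for $G\circ F_n$ that is insensitive to the modulus of the orbit and yields escape uniformly.
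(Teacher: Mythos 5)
Your proof is correct, and for the hard inclusion it takes a genuinely different route from the paper's. The paper shows $\overline{\mathbb{C}}\setminus K\subset\mathcal{A}_{(f_n)}(\infty)$ by case analysis on $|z|$, with the delicate case being non-real $z=x+iy$, $|z|<1$, $x\in K$: there the Cantor structure of $K$ (basic intervals of length $<y^2/8$ for large $n$) and a root-by-root comparison of $|z-x_j|$ against $|a-x_j|$ (via explicit distance/law-of-cosines estimates and Lemma~\ref{eps}) force $|F_{N+1}(z)|>1$. Your argument replaces all of that with a single potential-theoretic inequality: the polynomial preimage formula $G_{\overline{\mathbb{C}}\setminus f_n^{-1}([-1,1])}=\tfrac{1}{d_n}\,G_{\overline{\mathbb{C}}\setminus[-1,1]}\circ f_n$ combined with the monotonicity $G_{\overline{\mathbb{C}}\setminus E_n}\geq G_{\overline{\mathbb{C}}\setminus[-1,1]}$ coming directly from property $(A)(a)$, which yields $G\circ F_n\geq d_{N+1}\cdots d_n\,G\circ F_N$. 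This handles all $z_0\in\mathbb{C}\setminus K$ uniformly, with no distinction between real and complex or between $|z_0|$ large and small, and it also makes the local uniformity of escape transparent via compactness. The paper's argument is more elementary (no Green-function theory beyond what Theorem~\ref{general} provides) and, incidentally, gives a quantitative lower bound $|F_{N+1}(z)|>1$ at a concrete finite stage; yours is shorter and more conceptual, isolating $(A)(a)$ as the sole driver of escape. One small remark: your treatment of $K\subset J_{(f_n)}$ re-proves (via Lemma~\ref{eps}) that nearby real points escape, but once your Green-function step establishes $\mathbb{C}\setminus K\subset\mathcal{A}_{(f_n)}(\infty)$ in full, the boundary-point argument is immediate from the nowhere-density of $K$ alone, so that use of Lemma~\ref{eps} can be dropped.
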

	\begin{proof}
		Let us prove first the inclusion $J_{(f_n)}\subset K$. Let $R>1$ be any number satisfying $A_1 R(1-(A_2/{(R-1)}))>2$. Then by part $(b)$ of Theorem \ref{general}, we have $\mathcal{A}_{(f_n)}(\infty)=\cup_{k=1}^{\infty}{F_k}^{-1}(\triangle_R)$ and $f_n({\overline{\triangle}_R})\subset \triangle_R$ for all $n$. If we show that $|F_n(z)|>1+\epsilon$ for some $n\in\mathbb{N}$ and for some positive $\epsilon$, this implies that $F_{n+k}(z)\in\triangle_R$ for some positive $k$ by Lemma \ref{eps} and thus $z\not\in J_{(f_n)}$.
		
		Let $|z|=1+\epsilon$ where $\epsilon>0$. Then by Lemma \ref{eps}, $|F_1(z)|>1+2\epsilon$. Hence, $z\not\in J_{(f_n)}$.
		
		Let $|z|=1$ where $z\neq\pm 1$. Then using Lemma \ref{eps}, we see that $|F_1(z)|>1$. Thus, $z\not\in J_{(f_n)}$.
		
		If we let $z\in[-1,1]\setminus K$, then there exists a number $N\in\mathbb{N}$ such that $|F_N(z)|>1$. As a result, $z\not\in J_{(f_n)}$.
		
		Letting $z=x+iy$ where $x\not\in K$, $|y|>0$ and $|z|<1$ implies that there exists a positive number $N$ such that $|F_N(x)|>1$. Since all of the zeros of $F_n$ are on the real line by Lemma \ref{adm2}, we have $|F_n(z)|>|F_n(x)|>1$. Hence $z\not\in J_{(f_n)}$.
		
		Let $z=x+iy$ where $x\in K$, $|y|>0$ and $|z|<1$. Since $K$ is a Cantor set by Lemma \ref{adm2}, there exists a number $N\in\mathbb{N}$ such that $n>N$ implies that each connected component of $F_n^{-1}([-1,1])$ has length less than $y^2/8$. Let $x_1<x_2\ldots <x_{d_1\ldots d_{N+1}}$ be the roots of the polynomial $F_{N+1}$ and $E_j$ denote the connected component of $F_{N+1}^{-1}([-1,1])$ containing $x_j$ for $j=1,2,\ldots,d_1\ldots d_{N+1}$. Furthermore, let $E_{s}=[a, b]$ be the component containing the point $x$. Observe that $|F_{N+1}(a)|= |F_{N+1}(b)|=1$. So, in order to show $z\not\in J_{(f_n)}$, it is enough to show that $|F_{N+1}(z)|>|F_{N+1}(a)|$.
		
		If $j<s$, then $|a-x_j|\leq |x-x_j|<|z-x_j|$.
		
		If $j=s$, then $|a-x_j|<y^2/8<|y|\leq |z-x_j|$.
		
		If $j>s$, then 
		\begin{eqnarray*}
			|a-x_j|&=&\sqrt{|x_j-a|^2}\\
			&\leq&\sqrt{|x_j-x|^2+|x-a|^2+2|x_j-x||x-a|}\\
			&<&\sqrt{|x_j-x|^2+\frac{y^4}{64}+\frac{y^2}{2}}\\
			&<&\sqrt{|x_j-x|^2+y^2}= |z-x_j|.
		\end{eqnarray*}
		Therefore, $|F_n(z)|>1$. Thus, we have $J_{(f_n)}\subset K$ and $\overline{\mathbb{C}}\setminus K\subset\mathcal{A}_{(f_n)}(\infty)$ .
		
		For the inverse inclusion, observe that $K\subset \{z: |F_n(z)|\leq 1\mbox{ for all $n$}\}$ where $\{z: |F_n(z)|\leq 1\mbox{ for all $n$}\}\cap \mathcal{A}_{(f_n)}(\infty)=\emptyset$. Since $K$ is contained in the real line and $\overline{\mathbb{C}}\setminus K\subset \mathcal{A}_{(f_n)}(\infty)$ by the first part of the proof, we have $K\subset \partial \mathcal{A}_{(f_n)}(\infty)=J_{(f_n)}$. 
	\end{proof}
	
	\begin{corollary} Orthogonal polynomials associated to the equilibrium measure of $K$ and the corresponding recurrence coefficients (Jacobi coefficients)  can be calculated by Theorem \ref{orti} and Theorem \ref{Jacobi}.
	\end{corollary}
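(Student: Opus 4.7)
The plan is to observe that the corollary is a direct consequence of \textbf{Theorem \ref{zek}} combined with the two main theorems of Sections~3 and~4. Under the hypotheses of the previous theorem, $(f_n)\in\mathcal{R}$ is a sequence of admissible polynomials with property~$(A)$, so by Theorem~\ref{zek} we have $K=J_{(f_n)}$ and hence $\mu_K=\mu_{J_{(f_n)}}$. This identification is the only content that needs to be invoked to transfer everything said about $J_{(f_n)}$ in the previous sections to $K$.

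First I would apply Theorem~\ref{orti} directly: since $(f_n)\in\mathcal{R}$, parts $(a)$ and $(b)$ of that theorem give the monic orthogonal polynomial $P_1$ and the subsequence $\{P_{d_1\cdots d_l}\}_{l\geq 1}$ of monic orthogonal polynomials with respect to $\mu_K$ in closed form, in terms of the compositions $F_l$ and the coefficients of $f_{l+1}$. Note that the sequence of exponents $d_1\cdots d_l$ tends to infinity since each $d_j\geq 2$, so arbitrarily high-degree orthogonal polynomials are obtained.

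Second, for the remaining (intermediate) degrees and for the recurrence (Jacobi) coefficients, I would invoke Theorem~\ref{Jacobi}: since $K\subset[-1,1]\subset\mathbb{R}$ by Lemma~\ref{adm2}, the hypothesis $J_{(f_n)}\subset\mathbb{R}$ of Theorem~\ref{Jacobi} is fulfilled, and for every $l$ the moments $c_k=\int x^k\,d\mu_K$ with $1\leq k\leq d_1\cdots d_l-1$ are given recursively by Newton's identities applied to $F_l$. Letting $l\to\infty$ and using again that $d_1\cdots d_l\to\infty$, we obtain every moment $c_k$ for $k\in\mathbb{N}$, with $c_0=1$. Feeding these moments into the Hankel-determinant formula~\eqref{hankel} yields each orthonormal polynomial $p_n$, and the three-term recurrence $P_{n+1}(x)=(x-b_{n+1})P_n(x)-a_n^2 P_{n-1}(x)$ then determines the Jacobi coefficients $(a_n,b_n)_{n=1}^{\infty}$ unambiguously.

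There is no real obstacle here; the only thing to check is the routine observation that Theorems~\ref{orti} and~\ref{Jacobi} are applicable under the hypotheses of Theorem~\ref{zek}, which reduces to verifying $(f_n)\in\mathcal{R}$ (given) and $J_{(f_n)}=K\subset\mathbb{R}$ (Theorem~\ref{zek} and Lemma~\ref{adm2}). The corollary is then a formal consequence.
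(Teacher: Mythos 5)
Your proposal is correct and matches the (implicit) argument intended by the paper: Theorem~\ref{zek} gives $K=J_{(f_n)}$ with $K\subset\mathbb{R}$ (Lemma~\ref{adm2}), so $\mu_K=\mu_{J_{(f_n)}}$ and both Theorem~\ref{orti} (explicit orthogonal polynomials of degree $d_1\cdots d_l$) and Theorem~\ref{Jacobi} (all moments via Newton's identities, hence all orthogonal polynomials and Jacobi coefficients through \eqref{hankel}) apply verbatim. The paper states the corollary without proof precisely because this is the only, and entirely routine, verification required.
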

	
	\section{Smoothness of Green's functions}
	For some generalized Julia sets a deeper analysis can be done. In this section we consider a modification
	$K_1(\gamma)$ of the set $K(\gamma)$ from \cite{gonc} that will quite correspond to Theorem \ref{zek}.
	We give a necessary and sufficient condition on the parameters that makes the Green
	function $G_{\overline{\mathbb{C}}\setminus K_1(\gamma)}$ optimally smooth. Although smoothness properties of
	Green functions are interesting in their own rights, in our case the optimal smoothness of
	$G_{\overline{\mathbb{C}}\setminus K_1(\gamma)}$ is necessary for $K_1(\gamma)$ to be a Parreau-Widom set.
	
	Let $K\subset \mathbb{C}$ be a non-polar compact set. Then $G_{\overline{\mathbb{C}}\setminus K}$ is said to be \emph{H\"{o}lder continuous} with exponent $\beta$ if there exists a number $A>0$ such that $$G_{\overline{\mathbb{C}}\setminus K}(z)\leq A (\mathrm{dist}(z,K))^\beta,$$ holds for all $z$ satisfying $\mathrm{dist}(z,K)\leq 1,$ where $\mathrm{dist}(\cdot)$ stands for the distance function. For applications of smoothness of Green functions, we refer the reader to  \cite{Ciez}.
	
	Smoothness properties of Green functions are examined for a variety of sets. For the complement of autonomous Julia sets, see \cite{Kosek} and for the complement of $J_{(f_n)}$ see \cite{Bruck1,Bruck}. When $K$ is a symmetric
	Cantor-type set in $[0,1]$, it is possible to give a sufficient and necessary condition in order the Green function for the complement of the Cantor set is H\"{o}lder continuous with the exponent $1/2$, i.e. optimally smooth. See Chapter $5$ in \cite{tot} for details.
	
	We will use density properties of equilibrium measures. By the next theorem, which is proven in \cite{Tookos}, it is possible to associate the density properties of equilibrium measures with the smoothness properties of Green's functions.
	\begin{theorem}\label{density} Let $K\subset\mathbb{C}$ be a non-polar compact set which is regular with respect to the Dirichlet problem. Let $z_0\in\partial \Omega$ where $\Omega$ is the unbounded component of $\overline{\mathbb{C}}\setminus K$. Then for every  $0<r<1$ we have
		$$\int\limits_{0}^{r}\frac{\mu_K(D_t(z_0))}{t}dt\leq \sup_{|z-z_0|=r} G_{\Omega}(z)\leq3\int\limits_{0}^{4r}\frac{\mu_K(D_t(z_0))}{t}dt.$$
	\end{theorem}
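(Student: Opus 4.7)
The plan is to deduce both inequalities from Jensen's formula together with an elementary Poisson-kernel bound, applied to $G_{\Omega}$ viewed as a subharmonic function on the whole plane. First, extend $G_{\Omega}$ from $\Omega$ to $\mathbb{C}$ by setting $G_{\Omega}\equiv 0$ on $K$. Since $K$ is regular for the Dirichlet problem the extension is continuous, and the representation $G_{\Omega}(z)=\log(1/\mathrm{Cap}(K))-U_{\mu_{K}}(z)$ combined with $\tfrac{1}{2\pi}\Delta U_{\mu_{K}}=-\mu_{K}$ shows that this extension is subharmonic on $\mathbb{C}$ with Riesz measure $\mu_{K}$; in particular $G_{\Omega}(z_{0})=0$. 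Jensen's formula then yields, for every $R>0$,
\[\frac{1}{2\pi}\int_{0}^{2\pi}G_{\Omega}(z_{0}+Re^{i\theta})\,d\theta=G_{\Omega}(z_{0})+\int_{0}^{R}\frac{\mu_{K}(D_{t}(z_{0}))}{t}\,dt=\int_{0}^{R}\frac{\mu_{K}(D_{t}(z_{0}))}{t}\,dt.\]

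For the lower bound I would simply take $R=r$ and use that the supremum over the circle $|z-z_{0}|=r$ dominates the mean over the same circle, which by the identity above is exactly $\int_{0}^{r}\mu_{K}(D_{t}(z_{0}))/t\,dt$.

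For the upper bound I would invoke the fact that a nonnegative subharmonic function on $\overline{D_{R}(z_{0})}$ is dominated on the smaller disk by the Poisson integral of its boundary values. The Poisson kernel $(R^{2}-|z-z_{0}|^{2})/|z-w|^{2}$ for $|z-z_{0}|=r<R$ and $|w-z_{0}|=R$ is bounded above by $(R+r)/(R-r)$, so
\[G_{\Omega}(z)\le\frac{R+r}{R-r}\cdot\frac{1}{2\pi}\int_{0}^{2\pi}G_{\Omega}(z_{0}+Re^{i\theta})\,d\theta.\]
Choosing $R=2r$ turns the leading constant into $3$; combining with Jensen's identity and the trivial enlargement $\int_{0}^{2r}\le\int_{0}^{4r}$ gives $\sup_{|z-z_{0}|=r}G_{\Omega}(z)\le 3\int_{0}^{4r}\mu_{K}(D_{t}(z_{0}))/t\,dt$.

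The main obstacle, and really the only nontrivial point, is the justification that the zero-extension of $G_{\Omega}$ across $K$ is genuinely subharmonic on $\mathbb{C}$ with Riesz measure $\mu_{K}$. This rests on Dirichlet regularity of $K$ (which gives $U_{\mu_{K}}\equiv\log(1/\mathrm{Cap}(K))$ on $K$ and hence continuity of the extension) together with the distributional identity $\Delta G_{\Omega}=2\pi\mu_{K}$. Once these are in place the proof is essentially two lines of Jensen and one line of Poisson, and the specific constants $3$ and $4$ appearing in the statement are artifacts of the particular choice $R=2r$ rather than anything sharp.
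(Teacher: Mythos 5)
The paper does not prove this theorem; it is quoted directly from To\'{o}kos and Totik \cite{Tookos}, so there is no internal proof to compare against. Your argument is correct and self-contained. The zero-extension of $G_\Omega$ across $K$ coincides with $\log\bigl(1/\mathrm{Cap}(K)\bigr)-U_{\mu_K}$, which is subharmonic on all of $\mathbb{C}$ with Riesz measure $\mu_K$; Dirichlet regularity gives $U_{\mu_K}(z_0)=\log\bigl(1/\mathrm{Cap}(K)\bigr)$ so the extension vanishes at $z_0$, and it also vanishes on the bounded components of $\mathbb{C}\setminus K$ by the maximum principle (they are disjoint from $\mathrm{supp}\,\mu_K\subset\partial\Omega$), so it really is the continuous extension of $G_\Omega$ by zero. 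Jensen's formula then gives the circle mean at radius $R$ as $\int_0^R \mu_K(D_t(z_0))/t\,dt$; taking $R=r$ and using that the supremum dominates the mean gives the left inequality, and taking $R=2r$ together with the subharmonic Poisson inequality and the elementary kernel bound $(R^2-|z-z_0|^2)/|z-w|^2\le (R+r)/(R-r)=3$ for $|z-z_0|=r$, $|w-z_0|=R$ gives the right inequality. One small remark worth making explicit: your choice $R=2r$ actually produces the sharper estimate $\sup_{|z-z_0|=r}G_\Omega(z)\le 3\int_0^{2r}\mu_K(D_t(z_0))/t\,dt$, of which the stated bound with $4r$ is a trivial weakening by monotonicity of the integral; the larger radius in the published statement does not reflect any gap in your reasoning.
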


	Let $\gamma:=(\gamma_n)_{n=1}^\infty$ be given such that $0<\gamma_n<1/4$ for all $n,\,\epsilon_n:=1/4-\gamma_n$.
	Take $f_n(z)=\frac{1}{2\gamma_n}(z^2-1)+1$ for $n\in \mathbb{N}.$ Thus, $F_1(z)=\frac{1}{2\gamma_1}(z^2-1)+1$
	and similarly $F_n(z)=\frac{1}{2\gamma_n}(F_{n-1}^2(z)-1)+1$ for $n\geq 2.$ It is easy to see that, as a polynomial of
	real variable, $F_n$ is admissible, it satisfies $(A)$ and, in addition, all minimums of $F_n$ are the same
	and equal to $1-\frac{1}{2\gamma_n}.$ Then $K_1(\gamma)= \cap_{n=1}^\infty F_n^{-1}([-1,1])$ is a stretched version
	of the set $K(\gamma)$ from \cite{gonc}. Here,
	$$G_{\overline{\mathbb{C}}\setminus K_1(\gamma)}(z)=\lim_{n\to \infty} 2^{-n}\,\log|F_n(z)|.$$ Since the leading
	coefficient of $F_n$ is $2^{1-2^n}\gamma_n\,\gamma_{n-1}^2 \cdots \gamma_1^{2^{n-1}},$ the logarithmic capacity
	of $K_1(\gamma)$ is $2 \exp(\sum_{n=1}^{\infty}2^{-n}\log \gamma_{n}).$ 
	
	If, in addition, for some $0<c<1/4$ we have $\gamma_n\geq c$ for all $n$, then  $(f_n)\in\mathcal{R}$ and, by
	Theorem \ref{zek}, $K_1(\gamma)= J_{(f_n)}$. Without this condition the sequence $(f_n)$ is not regular, the set $K_1(\gamma)$ is not uniformly perfect (at least if we assume that $\gamma_n\leq 1/32$ for all $n\in\mathbb{N}$, see Theorem 3 in \cite{gonc}), but polynomials from Theorem \ref{orti} are still orthogonal, by \cite{alpgon}.
	
	In the limit case, when all $\gamma_n=1/4,\,F_n$ is the Chebyshev polynomial (of the first kind) $T_{2^n}$
	and $K_1(\gamma)=[-1,1].$

	Let $I_{1,0}:=[-1,1]$. The set $F_n^{-1}([-1,1])$ is a disjoint union of $2^n$ non-degenerate closed intervals $I_{j,n}=[a_{j,n},b_{j,n}]$ with length $l_{j,n}$ for $1\leq j\leq 2^n.$ We call them {\it basic intervals of $n-$th level}. The inclusion $F_{n+1}^{-1}([-1,1])\subset F_{n}^{-1}([-1,1])$  implies that
	$I_{2j-1,n+1}\cup I_{2j,n+1}\subset I_{j,n}$ where $a_{2j-1,n+1}=a_{j,n}$ and $b_{2j,n+1}=b_{j,n}$.
	We denote the gap $(b_{2j-1,n+1},a_{2j,n+1})$ by $H_{j,n}$ and the length of the gap by $h_{j,n}$. Thus, $$K_1(\gamma)=[-1,1]\setminus \left(\bigcup_{\substack{n=0}}^\infty\bigcup_{\substack{1\leq j\leq 2^n}} H_{j,n}\right).$$

	Let us consider the parameter function $v_{\gamma}(t)=\sqrt{1-2\gamma(1-t)}$  for $|t|\leq 1$ with
	$0<\gamma\leq 1/4.$ This increasing and concave function is an analog of $u$ from \cite{gonc}. By means
	of $v_{\gamma}$ we can write the endpoints of the basic intervals of $n-$th level, which are the solutions
	of $F_k(x)=-1$ for $1\leq k \leq n$ together with the points $\pm 1.$ Namely, $F_n(x)=-1$ gives
	$F_{n-1}(x)=\pm v_{\gamma_n}(-1),$ then $F_{n-2}(x)=\pm v_{\gamma_{n-1}}(\pm v_{\gamma_n}(-1)),$ etc.
	The iterates eventually give $2^n$ values
	\begin{equation}\label{pm}
		x=\pm v_{\gamma_{1}}\circ(\pm v_{\gamma_2}\circ( \cdots \pm v_{\gamma_{n-1}}\circ (\pm v_{\gamma_n}(-1)\cdots),
	\end{equation}
	which are the endpoints $\{b_{2j-1,n},a_{2j,n}\}_{j=1}^{2^{n-1}}.$ The remaining $2^n$ points can be found
	similarly, as the solutions of $F_k(x)=-1$ for $1\leq k <n$ and $\pm 1.$
	
	As in Lemma 2 in  \cite{gonc}, $\min_{1\leq j\leq 2^n}l_{j,n}$ is realized on the first and the last
	intervals. Since the rightmost solution of  $F_n(x)=-1,$ namely $a_{2^n,n},$  is given by \eqref{pm}
	with all signs positive, we have
	\begin{equation}\label{first}
		l_{1,n}=l_{2^n,n}=1-v_{\gamma_{1}}(v_{\gamma_2}( \cdots  v_{\gamma_{n-1}}( v_{\gamma_n}(-1)\cdots).
	\end{equation}
	
	The next lemma shows that $l_{1,n}$ can be evaluated in terms of $\delta_n:=\gamma_1\gamma_2\cdots\gamma_n.$
	
	\begin{lemma}\label{elel}
		For each $\gamma$ with $0<\gamma_k \leq 1/4$ and for all $n\in\mathbb{N}$  we have
		$$2\,\delta_n\leq l_{1,n}\leq(\pi^2/2)\, \delta_n.$$
	\end{lemma}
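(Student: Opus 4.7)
The plan is to recast the nested radical \eqref{first} as a scalar recurrence that is amenable to both algebraic and trigonometric estimates. Set $s_n := 2$ and, for $0 \leq k \leq n-1$, let $s_k := 1 - v_{\gamma_{k+1}}(\cdots v_{\gamma_n}(-1)\cdots)$, so that $l_{1,n} = s_0$. Squaring the identity $1 - s_k = \sqrt{1 - 2\gamma_{k+1} s_{k+1}}$ produces the clean recursion
\begin{equation*}
s_k(2 - s_k) = 2\gamma_{k+1}\, s_{k+1}, \qquad 0 \leq k \leq n-1.
\end{equation*}
Since $s_k \in [0, 2]$, writing $s_k = 2\gamma_{k+1} s_{k+1}/(2-s_k) \geq \gamma_{k+1} s_{k+1}$ and iterating backwards from $s_n = 2$ gives the lower bound $l_{1,n} = s_0 \geq 2\delta_n$ at once.

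For the upper bound I will substitute $s_k = 2\sin^2\theta_k$ with $\theta_k \in (0, \pi/2]$, which forces $\theta_n = \pi/2$. Because $s_k(2-s_k) = \sin^2(2\theta_k)$, the recurrence becomes $\sin(2\theta_k) = 2\sqrt{\gamma_{k+1}}\,\sin\theta_{k+1}$; expanding the double angle and multiplying over $k = 0, \ldots, n-1$ telescopes, yielding
\begin{equation*}
\sin\theta_0 \prod_{k=0}^{n-1}\cos\theta_k = \sqrt{\delta_n}\,\sin\theta_n = \sqrt{\delta_n}.
\end{equation*}
Thus the desired inequality $l_{1,n} = 2\sin^2\theta_0 \leq (\pi^2/2)\delta_n$ is equivalent to $\prod_{k=0}^{n-1}\cos\theta_k \geq 2/\pi$, and everything reduces to a sharp lower bound on this finite cosine product.

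To obtain it, I will first prove by downward induction the rough estimate $s_k \leq 2^{k-n+1}$ for $0 \leq k \leq n$ (so that $\theta_k \leq \pi/4$ for $k \leq n-1$), and then establish the sharper bound $\theta_k \leq \pi/2^{n-k+1}$ by a second downward induction. The key inductive step uses $\sqrt{\gamma_{k+1}} \leq 1/2$ to convert the recurrence into $\sin(2\theta_k) \leq \sin\theta_{k+1}$; since $2\theta_k \leq \pi/2$ and $\theta_{k+1} \leq \pi/2$ both lie in the range where $\sin$ is injective, this gives $2\theta_k \leq \theta_{k+1}$ and closes the induction. Because cosine decreases on $[0, \pi/2]$, we then have
\begin{equation*}
\prod_{k=0}^{n-1}\cos\theta_k \geq \prod_{j=2}^{n+1}\cos(\pi/2^j) = \frac{1}{2^n\sin(\pi/2^{n+1})} \geq \frac{2}{\pi},
\end{equation*}
where the middle equality is the classical product formula $\sin x = 2^n \sin(x/2^n)\prod_{k=1}^n\cos(x/2^k)$ applied at $x = \pi/2$, and the final step uses $\sin y \leq y$. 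The main obstacle is this sharp bound on $\theta_k$: it is saturated precisely in the limit $\gamma_k \equiv 1/4$, where $F_n = T_{2^n}$ and the above chain matches Vi\`ete's formula exactly; any cruder estimate (for example, using only the pointwise bound $1 - v_\gamma(t) \leq 2\gamma(1-t)$) loses a factor of order $2^n$ and misses the sharp constant $\pi^2/2$.
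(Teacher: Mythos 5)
Your proof is correct, but it takes a genuinely different route from the paper's. The paper factors $l_{1,n}=2\,\varkappa_n(\gamma)\,\delta_n$ where $\varkappa_n(\gamma)$ is the explicit product $\prod_{j=1}^n \frac{2}{1+v_{\gamma_j}(v_{\gamma_{j+1}}(\cdots v_{\gamma_n}(-1)\cdots))}$, then exploits the pointwise monotonicity $v_{1/4}\leq v_\gamma\leq 1$ to sandwich $1\leq\varkappa_n(\gamma)\leq\varkappa_n(1/4)$, so both inequalities drop out of the same factorization; the upper constant $\pi^2/4$ comes from identifying $\varkappa_n(1/4)$ with the normalized Chebyshev gap $1-\cos(\pi/2^n)$. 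You instead isolate the scalar recursion $s_k(2-s_k)=2\gamma_{k+1}s_{k+1}$, dispatch the lower bound from $2-s_k\leq 2$, and for the upper bound make the substitution $s_k=2\sin^2\theta_k$, so that the recursion telescopes to $\sin\theta_0\prod_{k=0}^{n-1}\cos\theta_k=\sqrt{\delta_n}$ and the claim becomes a lower bound on a cosine product. You then prove $\theta_k\leq\pi/2^{n-k+1}$ by downward induction (valid, given the preliminary bound $s_k\leq 1$ ensuring $2\theta_k\leq\pi/2$ so that injectivity of $\sin$ applies) and invoke Vi\`ete's product to finish. Both arguments encode the same comparison with the Chebyshev case $\gamma_k\equiv 1/4$, but the paper's monotonicity-of-$\varkappa_n$ argument is tighter and gets both inequalities from one object, whereas your trigonometric substitution makes the structure of the Chebyshev bound more explicit at the price of two auxiliary inductions.
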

	
	\begin{proof}
		Clearly, $1-v_{\gamma}(t)= \frac{2}{1+v_{\gamma} (t)}\,\gamma (1-t).$ Repeated application of this to
		\eqref{first} gives the representation $ l_{1,n}=2\, \varkappa_n(\gamma)\,\delta_n,$
		where $\varkappa_n(\gamma)$ is equal to
		$$ \frac{2}{1+v_{\gamma_1}( v_{\gamma_2}(\cdots v_{\gamma_n}(-1)\cdots)}
		\,\,\,\frac{2}{1+v_{\gamma_2}( \cdots v_{\gamma_n}(-1)\cdots)} \cdots
		\frac{2}{1+v_{\gamma_n}(-1)}. $$	Since $ v_{1/4}(t) \leq v_{\gamma}(t)\leq 1,$ we have $1\leq \varkappa_n(\gamma)\leq \varkappa_n(1/4),$	where the last denotes the value of $\varkappa_n$  in the case when all $\gamma_k=1/4.$ This gives the left part of the inequality. Let $C_{2^n}$ be the distance between $1$ and the rightmost extrema of $T_{2^n}.$  Hence, see e.g. p.7. of \cite{rivlin},
		$C_{2^n}=1-\cos(\pi/2^n)<\pi^2/(2\cdot 4^n).$ On the other hand,	$C_{2^n}=2\, \varkappa_n(1/4)\,4^{-n}.$ Therefore, $\varkappa_n(1/4) < \pi^2/4,$ and the lemma follows. 
	\end{proof}
	
	For the case $\gamma_n\leq 1/32$ for all $n$, smoothness of the Green's function for $\overline{\mathbb{C}}\setminus K(\gamma)$ and related properties are examined in \cite{gonc2}, \cite{gonc}. The next theorem is complementary to Theorem 1 of \cite{gonc2} and examines the smoothness of the Green function as $\gamma_n\rightarrow 1/4$.
	
	\begin{theorem}
		The function $G_{\overline{\mathbb{C}}\setminus K_1(\gamma)}$ is H\"{o}lder continuous with the exponent $1/2$ if and only if $\sum_{k=1}^\infty \epsilon_k<\infty$.
	\end{theorem}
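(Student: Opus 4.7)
The argument routes through Theorem~\ref{density}, which converts H\"older-$1/2$ smoothness of $G_{\overline{\mathbb{C}}\setminus K_1(\gamma)}$ at a point $z_0\in K_1(\gamma)$ into a two-sided estimate on $\mu_{K_1(\gamma)}(D_t(z_0))/t,$ and through Lemma~\ref{elel}. The key algebraic equivalence is that, for $\epsilon_k\in(0,1/4),$
$$
\sum_{k=1}^\infty\epsilon_k<\infty\ \Longleftrightarrow\ \inf_{n}\prod_{k=1}^n(1-4\epsilon_k)>0\ \Longleftrightarrow\ \inf_n 4^n\delta_n>0,
$$
which by Lemma~\ref{elel} is in turn equivalent to $l_{1,n}\asymp 4^{-n}$. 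I will also use that $\mu_{K_1(\gamma)}(I_{j,n})=2^{-n}$ for every basic interval of level $n,$ which follows from Theorem~\ref{inv} applied to the admissible polynomial $F_n$ together with weak-$*$ convergence $\mu_{F_n^{-1}([-1,1])}\to\mu_{K_1(\gamma)}.$

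For the necessity, assume $\sum\epsilon_k=\infty,$ so $4^n l_{1,n}\to 0$. I test at the endpoint $z_0=1\in K_1(\gamma)$. The rightmost basic interval $I_{2^n,n}$ has length $l_{1,n}$ by the symmetry recorded around \eqref{first} and mass $2^{-n},$ so $\mu_{K_1(\gamma)}(D_t(1))\geq 2^{-n}$ for all $t\geq l_{1,n}.$ The lower bound in Theorem~\ref{density} with $r=2l_{1,n}$ gives
\begin{equation*}
\sup_{|z-1|=2l_{1,n}}G_{\overline{\mathbb{C}}\setminus K_1(\gamma)}(z)\ \geq\ \int_{l_{1,n}}^{2l_{1,n}}\frac{2^{-n}}{t}\,dt\ =\ 2^{-n}\log 2.
\end{equation*}
A H\"older-$1/2$ bound with constant $A$ would force $2^{-n}\log 2\leq A\sqrt{2\,l_{1,n}},$ i.e.\ $\inf_n 4^n l_{1,n}>0,$ contradicting the hypothesis.

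For the sufficiency, assume $\sum\epsilon_k<\infty;$ then $l_{1,n}\geq c\cdot 4^{-n}$ and, since $\gamma_n\to 1/4$, the ratio $l_{1,n-1}/l_{1,n}$ is bounded by Lemma~\ref{elel}. For $z_0\in K_1(\gamma)$ and small $t>0,$ pick $n$ with $l_{1,n}\leq t<l_{1,n-1},$ so $t\asymp 4^{-n}.$ Basic intervals of level $n$ are pairwise disjoint with length $\geq l_{1,n},$ hence $D_t(z_0)\cap\mathbb{R}$ meets at most $2t/l_{1,n}+2=O(1)$ of them; since each carries mass $2^{-n},$
$$
\mu_{K_1(\gamma)}(D_t(z_0))\ \leq\ C_1\,2^{-n}\ =\ C_1\sqrt{4^{-n}}\ \leq\ C_1\sqrt{t/c}\ =\ C_2\sqrt{t}.
$$
The upper bound in Theorem~\ref{density} then yields $\sup_{|z-z_0|=r}G_{\overline{\mathbb{C}}\setminus K_1(\gamma)}(z)\leq 3\int_0^{4r}(C_2/\sqrt{t})\,dt=12\,C_2\sqrt{r},$ the desired H\"older-$1/2$ estimate.

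The main obstacle is the interval-counting in the sufficiency step: combining the lower bound $l_{1,n}\geq c\cdot 4^{-n}$ with the upper estimate from Lemma~\ref{elel} to guarantee that a disk of radius $t\asymp 4^{-n}$ meets only boundedly many level-$n$ basic intervals, uniformly in $z_0$ and $n.$ Everything else is bookkeeping with Theorem~\ref{density} and Lemma~\ref{elel}.
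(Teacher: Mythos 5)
Your proof is correct and follows the same route as the paper's: convert H\"older-$1/2$ smoothness into a two-sided density estimate $\mu_{K_1(\gamma)}(D_t(z_0))\asymp\sqrt t$ via Theorem~\ref{density}, then combine $\mu_{K_1(\gamma)}(I_{j,n})=2^{-n}$ with Lemma~\ref{elel} to reduce everything to the algebraic equivalence $\inf_n 4^n\delta_n>0\Leftrightarrow\sum\epsilon_k<\infty$. The only cosmetic difference is that in the sufficiency direction you count level-$n$ basic intervals meeting $D_t(z_0)$ (which requires the extra observation $l_{1,n-1}/l_{1,n}=O(1)$), whereas the paper counts level-$(n-1)$ intervals, which immediately caps the count at four without any auxiliary ratio bound.
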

	
	\begin{proof} Let us assume that $\sum_{k=1}^\infty \epsilon_k<\infty$. Then $\prod_{k=1}^{\infty}(1-4\epsilon_k)=a$ for some $0<a<1, $
		$\delta_n=4^{-n} \prod_{k=1}^n(1-4\epsilon_k)>a\,4^{-n}$ and, by Lemma \ref{elel},
		$2a\cdot 4^{-n}\leq l_{1,n}$ for all $n\in\mathbb{N}.$
		
		Let $z_0$ be an arbitrary point of $K_1(\gamma)$. We claim that
		$\mu_{K_1(\gamma)}(D_t(z_0)) \leq \frac{4\sqrt 2}{\sqrt a}\sqrt{t}$ for all $t>0.$ It is evident for
		$t\geq 1/32,$ as $\mu_{K_1(\gamma)}$ is a probability measure. Let $0<t<1/32.$
		Fix $n$ with $l_{1,n}<t\leq l_{1,n-1}.$ We have $t>2a\cdot 4^{-n}.$
		
		On the other hand,
		$D_t(z_0)$ can contain points from at most $4$ basic intervals of level $n-1$.	Since $\mu_{F_n^{-1}([-1,1])}\rightarrow \mu_{K_1(\gamma)},$ by \cite{simon1}, we have
		$\mu_{K_1(\gamma)}(I_{j,k})=1/2^k$ for all $k\in\mathbb{N}$ and $1\leq j\leq 2^k$.	Therefore, $\mu_{K_1(\gamma)}(D_t(z_0))\leq 2^{3-n}<8 \sqrt{t/2a},$ which is our claim. The optimal smoothness of $G_{\overline{\mathbb{C}}\setminus K_1(\gamma)}$ follows from Theorem \ref{density}.
		
		Conversely, suppose that, on the contrary, $\sum_{k=1}^\infty\epsilon_k=\infty$. This is equivalent to the condition $4^{n}\delta_n\rightarrow 0$ as $n\rightarrow\infty$. Thus, for any $\sigma>0$, there is a number $N$ such that $n>N$ implies that $4^{n}\delta_n<\sigma$. For any $t\leq l_{1,N+1}$, there exists $m\geq N+1$ such that $l_{1,m+1}<t\leq l_{1,m}$. Then, $\mu_{K_1(\gamma)}(D_t(0))\geq \mu_{K_1(\gamma)}(I_{1,m+1})=2^{-m-1}.$ On the other hand, by Lemma \ref{elel},
		$t\leq 2\pi^2 \sigma \,4^{-m-1}$. Therefore, for any $t\leq l_{1,N+1}$ we have
		$\frac{\sqrt t}{ \pi \sqrt {2\,\sigma}}\leq \mu_{K_1(\gamma)}(D_t(0))$. Hence, the inequality
		$$\frac{\sqrt 2}{\pi\sqrt \sigma} \,\sqrt r \leq\int_{0}^r \frac{\mu_{K_1(\gamma)}(D_t(0))}{t}dt,$$
		holds for $r \leq l_{1,N+1}$. By Theorem \ref{density},
		$G_{\overline{\mathbb{C}}\setminus K_1(\gamma)}(-r) \geq \frac{\sqrt 2}{\pi\sqrt \sigma} \,\sqrt r.$
		Since $\sigma$ is  here  as small as we wish, the Green function is not optimally smooth. 
	\end{proof}
	
	\section{Parreau-Widom sets}
	Parreau-Widom sets are of special interest in the recent spectral theory of orthogonal polynomials. For different aspects of the theory, we refer the reader to the articles \cite{christiansen,gesztesy,Peherstorfer,volberg} among others.
	
	A compact set $K\subset\mathbb{R}$ which is regular with respect to the Dirichlet problem is called a \emph{Parreau-Widom} set if $PW(K):=\sum_{j} G_{\overline{\mathbb{C}}\setminus K}(c_j)<\infty$ where $\{c_j\}$ is
	the set of critical points of $G_{\overline{\mathbb{C}}\setminus K},$ which, clearly, is at most countable.
	A Parreau-Widom set has always positive Lebesgue measure, see \cite{christiansen}.
	
	Our aim is to give a criterion when $K_1(\gamma)$ is a Parreau-Widom set.
	Note that, since autonomous Julia-Cantor sets in $\mathbb{R}$ have zero Lebesgue measure (see e.g. Section 1.19. in \cite{Lyubich}), such sets can not be Parreau-Widom.
	
	We begin with a technical lemma.
	
	\begin{lemma}\label{bbb} Given $p\in \mathbb{N},$ let $b_0=1$ and $ b_{k+1}=b_k(1+4^{-p+k}\,b_k)$ for
		$0\leq k \leq p-1.$ Then $b_p <2.$
	\end{lemma}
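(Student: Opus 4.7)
My plan is to recognize this recurrence as a conjugate of the Chebyshev doubling map $x\mapsto 2x^2-1$. The first step is the rescaling $a_k:=4^{k-p}b_k$: since $a_p=b_p$, the desired bound becomes $a_p<2$, and the recurrence transforms to
\begin{equation*}
a_{k+1}=4a_k(1+a_k),\qquad a_0=4^{-p}.
\end{equation*}
The algebraic identity $1+4a(1+a)=(1+2a)^2$ then suggests the further substitution $c_k:=1+2a_k$, which satisfies the clean recurrence $c_{k+1}=2c_k^2-1=T_2(c_k)$, so that $c_k=T_{2^k}(c_0)$ with $c_0=1+2\cdot 4^{-p}$. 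Proving $b_p<2$ thus reduces to proving $c_p<5$.

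Next I would use the standard hyperbolic parametrization for Chebyshev polynomials outside $[-1,1]$: write $c_0=\cosh\phi$, so that $c_p=\cosh(2^p\phi)$. Rewriting $\cosh\phi-1=2\sinh^2(\phi/2)$, the condition $\cosh\phi=1+2\cdot 4^{-p}$ becomes $\sinh(\phi/2)=2^{-p}$. Since $\sinh x\geq x$ for $x\geq 0$, we get $\phi/2\leq 2^{-p}$, hence $2^p\phi\leq 2$ and therefore $c_p=\cosh(2^p\phi)\leq\cosh 2$.

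Converting back, $b_p=a_p=(c_p-1)/2\leq(\cosh 2-1)/2=\sinh^2 1$, and since $\sinh 1=(e-e^{-1})/2<\sqrt 2$, we obtain $b_p<2$, as required. The main conceptual obstacle is spotting the Chebyshev conjugation. A naive direct induction trying to propagate the bound $b_k\leq 2$ through all $k$ is essentially circular: summing the increments $4^{k-p}b_k^2$ with $b_k\leq 2$ only yields $b_p\leq 1+4/3=7/3$, which is too weak. The hyperbolic substitution, which exploits the precise structure $4a(1+a)$ built into the recurrence, is what delivers the sharp constant.
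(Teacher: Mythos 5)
Your proof is correct; I checked each step. The substitutions $a_k = 4^{k-p}b_k$ and $c_k = 1+2a_k$ do transform the recurrence into $c_{k+1}=2c_k^2-1$, so $c_p = T_{2^p}(c_0)$ with $c_0 = 1+2\cdot 4^{-p}$; the identity $\cosh\phi - 1 = 2\sinh^2(\phi/2)$ gives $\sinh(\phi/2)=2^{-p}$, and $\sinh x\ge x$ then gives $2^p\phi\le 2$, hence $c_p\le\cosh 2$ and $b_p\le\sinh^2 1 < 2$.

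This is, however, a genuinely different route from the paper's. The paper expands $b_k=\sum_{n=0}^{2^k-1}a_{n,k}4^{-np}$ as a polynomial in $4^{-p}$, derives the coefficient recursion $a_{n,k+1}=a_{n,k}+4^k\sum_{j=1}^{n}a_{n-j,k}a_{j-1,k}$, proves $a_{n,k}<\tfrac{4^{1-n}}{3}4^{nk}$ by a double induction, and then sums the geometric series to get $b_p<1+\tfrac{1}{3}\sum_{n\ge 1}4^{1-n}<2$. That argument is entirely elementary and self-contained but requires careful combinatorial bookkeeping. Your conjugation to the Chebyshev doubling map is shorter, more conceptual, and yields the sharper constant $\sinh^2 1\approx 1.38$; it also makes visible why the sharp answer has the form it does (the recurrence is literally the backward branch of the same Chebyshev dynamics that generate $K_1(\gamma)$ in the limiting case $\gamma_n\equiv 1/4$). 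One small caveat: your closing remark that a direct induction on $b_k\le 2$ is ``essentially circular'' is true only for the additive estimate you describe (summing increments gives $7/3$); a multiplicative version, $b_p=\prod_{k=0}^{p-1}(1+4^{k-p}b_k)\le\exp\bigl(2\sum_{k=0}^{p-1}4^{k-p}\bigr)<e^{2/3}<2$, does in fact close the induction and would give a third, even shorter proof. So the hyperbolic substitution is not strictly necessary for the bound $<2$, though it does give the exact asymptotic constant.
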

	\begin{proof} We have $b_1=1+4^{-p},\, b_2=1+(1+4)\,4^{-p}+2\cdot 4\cdot 4^{-2p}+4\cdot 4^{-3p}, \cdots,$
		so $b_k= \sum_{n=0}^{N_k}a_{n,k}  4^{-np} $ with $N_k=2^k-1$ and $a_{0,k}=1$. Let $a_{n,k}:=0$ if $n>N_k$. The definition of
		$b_{k+1}$ gives the recurrence relation
		\begin{equation}\label{b}	a_{n,k+1}= a_{n,k}+ 4^k\sum_{j=1}^n a_{n-j,k}\, a_{j-1,k} \,\,\, \mbox {for}\,\,\, 1\leq n \leq N_{k+1}.
		\end{equation}
		If $N_k< n \leq N_{k+1},$ that is $n=N_{k}+m$ with  $1\leq m \leq N_k+1,$ then the formula takes the form
		$a_{n,k+1}= 4^k\sum_{j=m}^{n-m+1} a_{n-j,k}\, a_{j-1,k},$ since $ a_{n-j,k}=0$ for $j<m$ and
		$a_{j-1,k}=0$ for $j>n-m+1.$ In particular, $a_{N_{k+1},k+1}= 4^k\,a_{N_k,k}^2$ and
		$a_{1,k+1}= a_{1,k}+ 4^k.$ Therefore, $a_{1,k}=1+4+\cdots +4^{k-1}< 4^k/3.$
		Let us show that $a_{n,k}< C_n\,4^{nk}$ with $C_n=4^{1-n}/3$ for $n \geq 2.$ This gives the desired result,
		as $b_p=\sum_{n=0}^{N_p}a_{n,p} 4^{-np}< 1+ 1/3 \cdot \sum_{n=1}^{N_p}4^{1-n}<2.$
		
		By induction, suppose the inequality $a_{j,k}< C_j\,4^{jk}$ is valid for $1\leq j \leq n-1$ and for all $k>0.$
		We consider $j=n.$ The bound $a_{n,i}< C_n\,4^{ni}$ is valid for $i=1,$ as $a_{n,1}=0$ for $n\geq 2.$ Suppose it is valid as well
		for $i\leq k.$
		
		We use \eqref{b} repeatedly, in order to reduce the second index, and, after this, the induction hypothesis:
		\begin{align*} a_{n,k+1}&= \sum_{q=1}^k 4^q\sum_{j=1}^n a_{n-j,q}\, a_{j-1,q}<\sum_{q=1}^k 4^{nq}\sum_{j=1}^n C_{n-j}\, C_{j-1}
			< \sum_{q=1}^k 4^{nq}\\ 
			&< C_n \,4^{n(k+1)}, 
		\end{align*}
		where $C_0:=1.$ Therefore the desired bound is valid for
		all positive $n$ and $k$. 
	\end{proof}

	\begin{theorem}$K_1(\gamma)$ is a Parreau-Widom set if and only if
		$\,\,\sum_{k=1}^\infty \sqrt{\epsilon_k} <\infty$.
	\end{theorem}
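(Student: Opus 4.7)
I reduce the Parreau--Widom sum to $\tfrac12\sum_{k\ge 1}G(w_k)$, where $w_k:=1-1/(2\gamma_k)$ and $G:=G_{\overline{\mathbb C}\setminus K_1(\gamma)}$, and then show $G(w_k)\asymp\sqrt{\epsilon_k}$. For the first reduction I need the critical set of $G$. Since $f_n'(z)=z/\gamma_n$ has unique zero $0$, chain rule and induction give $\mathrm{Crit}(F_n)=\bigcup_{k=0}^{n-1}F_k^{-1}(0)$ (with $F_0(z)=z$); each such $z^*$ lies off $K_1(\gamma)$ because $F_{k+1}(z^*)=f_{k+1}(0)=w_{k+1}<-1$. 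The identity $2\partial G=-R_{(f_n)}$ from \eqref{77}, combined with $\mathrm{Im}\,R_{(f_n)}(z)=\mathrm{Im}(z)\int|x-z|^{-2}\,d\mu_{K_1(\gamma)}(x)\neq 0$ for $z\notin\mathbb R$, forces every critical point of $G$ to be real. On $\mathbb R$ the positive harmonic function $G$ has exactly one local maximum in each bounded gap of $K_1(\gamma)$; a direct check of $F_k:I_{j,k}\to[-1,1]$ places the single zero of $F_k$ inside $I_{j,k}$ into the subgap $H_{j,k}$. Thus $\mathrm{Crit}(G)=\bigcup_{k\ge 0}F_k^{-1}(0)$, in bijection with the countable family of gaps of $K_1(\gamma)$.

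Next I collapse the sum. The functional equation $G(F_{k+1}z)=2^{k+1}G(z)$ applied at any $z^*\in F_k^{-1}(0)$ yields $G(z^*)=G(w_{k+1})/2^{k+1}$, a value independent of the choice of $z^*$. With $|F_k^{-1}(0)|=2^k$ for $k\ge 1$ and $|F_0^{-1}(0)|=1$, summing gives
\[ PW(K_1(\gamma))\;=\;\sum_{k=0}^\infty 2^k\cdot\frac{G(w_{k+1})}{2^{k+1}}\;=\;\tfrac12\sum_{k=1}^\infty G(w_k). \]

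Finally I estimate $G(w_k)$. By evenness of each $F_n$, $G(w_k)=G(1+\eta_k)$ with $\eta_k:=8\epsilon_k/(1-4\epsilon_k)$. Set $s_n:=F_n(1+\eta_k)-1$; then $s_0=\eta_k$ and $s_n=s_{n-1}(2+s_{n-1})/(2\gamma_n)\ge 4s_{n-1}$ (as $\gamma_n\le 1/4$), so $s_n\ge 4^n\eta_k$ and $n^*:=\min\{n:s_n\ge 1\}$ satisfies $n^*\le 1+\log_4(1/\eta_k)$. A further use of $\gamma_{n^*+1}\le 1/4$ gives $s_{n^*+1}\ge 6$, and the super-exponential step $F_{m+1}\ge F_m^2$ (valid whenever $F_m\ge 2$) iterates to
\[ G(w_k)\;=\;\lim_{n\to\infty}2^{-n}\log F_n(1+\eta_k)\;\ge\;2^{-(n^*+1)}\log 7\;\ge\;c_1\sqrt{\epsilon_k}, \]
an \emph{unconditional} lower bound, which already yields $\sum\sqrt{\epsilon_k}=\infty\Rightarrow PW(K_1(\gamma))=\infty$. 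For the matching upper bound under $\sum\sqrt{\epsilon_k}<\infty$, note that this forces $\sum\epsilon_k<\infty$, so the previous theorem makes $G$ H\"older continuous with exponent $1/2$; hence $G(w_k)\le A\sqrt{\eta_k}\le c_2\sqrt{\epsilon_k}$ for all large $k$, giving $PW(K_1(\gamma))\le(c_2/2)\sum\sqrt{\epsilon_k}<\infty$. The delicate part of the argument is the critical-point identification of the first paragraph; Lemma~\ref{bbb} then supplies an alternative, self-contained route to the upper bound, by showing directly that $s_{n^*}$ stays bounded by $4$ through comparison with the sequence $(b_k)$, so that $G(w_k)\lesssim\sqrt{\epsilon_k}$ follows without invoking the H\"older theorem.
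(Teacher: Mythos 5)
Your critical-point identification matches the paper's (the paper obtains $Y_n = Z_0 \cup \cdots \cup Z_{n-1}$ from $F_n' = F_{n-1}F_{n-1}'/\gamma_n$, which is exactly your $\mathrm{Crit}(F_n)=\bigcup_{k<n}F_k^{-1}(0)$), and your super-exponential lower bound for the Green function is a clean alternative to the paper's $\sigma_{n+p}$ bookkeeping. But the collapse of the sum rests on a false identity. The functional equation $G(F_{k+1}z)=2^{k+1}G(z)$ only holds in the autonomous case; in the non-autonomous case the pullback relation, which comes from $J_{(f_n)}=F_{k+1}^{-1}(J_{(f_{(k+1)+n})})$ in Theorem~\ref{general}(e), reads
\[
G_{\overline{\mathbb{C}}\setminus K_1(\gamma)}(z^*) \;=\; \frac{1}{2^{k+1}}\,G_{\overline{\mathbb{C}}\setminus K_1(\gamma^{(k+1)})}(w_{k+1}),
\qquad z^*\in F_k^{-1}(0),
\]
where $\gamma^{(k+1)}=(\gamma_{k+2},\gamma_{k+3},\ldots)$ and $K_1(\gamma^{(k+1)})=J_{(f_{(k+1)+n})}$ is the \emph{tail} Cantor set, not $K_1(\gamma)$. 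So the correct collapsed sum is $PW(K_1(\gamma))=\tfrac12\sum_{j\ge 1}G_{\overline{\mathbb{C}}\setminus K_1(\gamma^{(j)})}(w_j)$, not $\tfrac12\sum_j G(w_j)$.

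Your subsequent estimate is accordingly for the wrong quantity: the recursion $s_n=s_{n-1}(2+s_{n-1})/(2\gamma_n)$ iterates the \emph{original} parameters $\gamma_1,\gamma_2,\ldots$ starting from $1+\eta_k$, hence computes $G_{\overline{\mathbb{C}}\setminus K_1(\gamma)}(1+\eta_k)$, whereas the PW sum requires iterating $\gamma_{k+1},\gamma_{k+2},\ldots$, i.e.\ estimating $G_{\overline{\mathbb{C}}\setminus K_1(\gamma^{(k)})}(1+\eta_k)$. The theorem is salvaged only because both of your bounds are in fact uniform in the tail: the lower bound uses only $\gamma_n\le 1/4$, and the H\"older constant from the previous theorem is $4\sqrt 2/\sqrt a$ with $a=\prod(1-4\epsilon_k)$, so it can only improve for a tail product $\prod_{j>k}(1-4\epsilon_j)\ge a$. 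You must state this uniformity; without it the proof as written establishes nothing about $PW(K_1(\gamma))$. The paper sidesteps the whole issue by working directly with $G(z^*)=\lim_m 2^{-m}\log|F_m(z^*)|$ for $z^*\in Z_{n-1}$, beginning from $|F_n(z^*)|=1+\sigma_n$ and never invoking a finite-step functional equation, which is the safer route in the non-autonomous setting.
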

	\begin{proof}
		Let $E_n=\{z\in \mathbb{C}: |F_n(z)|\leq 1\}.$ Then
		$G_{\overline{\mathbb{C}}\setminus E_n }(z)=2^{-n}\,\log |F_n(z)|.$ Clearly,  the critical points of
		$G_{\overline{\mathbb{C}}\setminus E_n }$ coincide with the critical points of
		$F_n$ and thus they are real. Let $Y_n=\{x: F_n^{\,'}(x)=0\},\,\, Z_n=\{x: F_n(x)=0\}.$ Clearly, $Y_n\cap Z_n=\emptyset$ and $Z_k\cap Z_n=\emptyset$
		for $n\ne k.$ Since $ F_n^{\,'}= F_{n-1} F_{n-1}^{\,'}/\gamma_n,$ we have $Y_n=Y_{n-1} \cup Z_{n-1},$
		so $Y_n=Z_{n-1} \cup Z_{n-2} \cup  \cdots \cup Z_0,$ where $Z_0=\{0\}.$ We see that
		$Y_n \subset Y_{n+1},$ so the set of critical points for $G_{\overline{\mathbb{C}}\setminus K_1(\gamma)}$
		is $\cup_{n=0}^{\infty}Z_n$ and
		$ PW(K_1(\gamma))=  \sum_{n=1}^{\infty}\sum_{z\in Z_{n-1}}G_{\overline{\mathbb{C}}\setminus K_1(\gamma)}(z).$
		In addition, for each $k\geq n$ the function $F_k$ is constant on the set $ Z_{n-1}$ which contains
		$2^{n-1}$ points. Let $s_n=2^{n-1} G_{\overline{\mathbb{C}}\setminus K_1(\gamma)}(z),$ where $z$ is any point from $Z_{n-1}$. Then
		\begin{equation}\label{PW}
			PW(K_1(\gamma))=\sum_{k=1}^{\infty} s_k.
		\end{equation}
		
		We can assume that $\sum_{k=1}^{\infty} \epsilon_k < \infty.$ Indeed, it is immediate if
		$\sum_{k=1}^\infty \sqrt{\epsilon_k} <\infty$. On the other hand, if $z\in Z_{n-1},$ that is $F_{n-1}=0,$
		then $F_n(z)=1-1/2\gamma_n=-1-\frac{8\epsilon_n}{1-4 \epsilon_n}.$ Since	$G_{\overline{\mathbb{C}}\setminus E_n } \nearrow G_{\overline{\mathbb{C}}\setminus K_1(\gamma)},$  we have	$ s_n > 1/2 \,\log|F_n(z)| > 1/2 \,\log (1+ 8\epsilon_n)>2\epsilon_n,$ as  $\log (1+ t)>t/2$ for $0<t<2.$
		Therefore the supposition  $PW(K_1(\gamma)) < \infty$ implies, by  \eqref{PW}, that
		$\sum_{k=1}^{\infty} \epsilon_k < \infty.$
		
		Let $a=\prod_{k=1}^{\infty}(1-4 \epsilon_k).$ By the remark above, $0<a<1.$ Our  aim is to evaluate $s_n$
		from both sides for large $n$. Let us fix $N\in\mathbb{N}$ such that $n>N$ implies that  $\epsilon_n \leq a/36.$ We consider only such $n$ after this point of the proof. Then $1-4 \epsilon_n >8/9$ and
		for $\sigma_n:=\frac{8\epsilon_n}{1-4 \epsilon_n}$ there exists $p \in \mathbb{N}$ such that
		\begin{equation}\label{def p}
			a\cdot 4^{-1-p} < \sigma_n \leq a\cdot 4^{-p}.
		\end{equation}
		
		Consider the function $f(t)=\frac{1}{2\beta}(t^2-1)+1$ for $t>1,$ where $\beta =1/4-\epsilon$
		with $\epsilon < 1/36.$ Thus, $F_{k+1}(z)=f(F_k(z))$ for $ \beta= \gamma_{k+1}.$
		If $t=1+\sigma$ for small $\sigma,$ then we will use the representation
		$f(t)= 1 + \sigma_1$ with $4 \sigma < \sigma_1 = 4 \sigma \,\,\frac{1+\sigma/2}{1-4\epsilon}.$
		Also, for each $t\geq 1$ we have $t^2 \leq f(t) < \frac{1}{2\beta}\,t^2 < \frac{9}{4}\,t^2.$
		
		Let us fix $z\in  Z_{n-1}$. Then, as above, $|F_n(z)|=1+\sigma_n.$ Then $ F_{n+1}(z)=1+\sigma_{n+1}$
		with $4 \sigma_n < \sigma_{n+1} = 4 \sigma_{n} \,\,\frac{1+\sigma_n/2}{1-4\epsilon_{n+1}}.$
		We continue in this fashion to obtain  $ F_{n+p}(z)=1+\sigma_{n+p}$ with
		
		\begin{equation}\label{n+p}
			4^p \, \sigma_n <  \sigma_{n+p}= 4^p \, \sigma_n \cdot \prod_{k=n}^{n+p-1} \frac{1+\sigma_k/2}{1-4\epsilon_{k+1}}.
		\end{equation}
		
		After that we use the second estimation for $f.$ This gives
		$ F_{n+p}^2(z)\leq F_{n+p+1}(z) < \frac{9}{4}\, F_{n+p}^2(z)$ and, for each $k \in \mathbb{N},$
		$$  F_{n+p}^{2^k}(z)\leq F_{n+p+k}(z) < (9/4)^{2^k-1} \, F_{n+p}^{2^k}(z).$$
		From this, we have  $$ 2^{-n-p} \,\log F_{n+p}(z) \leq G_{\overline{\mathbb{C}}\setminus E_{n+p+k} }(z)
		\leq  2^{-n-p} [ \log (9/4)+ \log F_{n+p}(z)].$$ Recall that  $$G_{\overline{\mathbb{C}}\setminus E_{n+p+k} }(z)  \nearrow G_{\overline{\mathbb{C}}\setminus K_1(\gamma)}(z),$$ as $k \to \infty$
		and  $s_n=2^{n-1} G_{\overline{\mathbb{C}}\setminus K_1(\gamma)}(z).$ Hence,
		$$ 2^{-p-1} \,\log F_{n+p}(z) \leq s_n \leq  2^{-p-1} [ \log (9/4)+ \log F_{n+p}(z)].$$
		
		Now suppose that $K_1(\gamma)$ is a Parreau-Widom set, so, by \eqref{PW}, the series $\sum_{k=1}^{\infty} s_k$
		converges. Then, by \eqref{n+p}, we have $s_n \geq  2^{-p-1} \,\log (1+ 4^p \, \sigma_n).$
		By \eqref{def p}, $4^p \, \sigma_n<1$ and $\log (1+ 4^p \, \sigma_n)> 4^p \, \sigma_n/2.$ Therefore,
		$s_n \geq  2^p \, \sigma_n/4.$ We use  \eqref{def p} once again to obtain $s_n \geq \sqrt{a\,\sigma_n}/8,$
		which implies the convergence of $\sum_{k=1}^{\infty} \sqrt{\epsilon_k}.$\\
		
		Conversely, suppose that $\,\sum_{k=1}^\infty \sqrt{\epsilon_k} <\infty$. Then
		$s_n \leq 2^{-p} \log(3/2) + 2^{-p-1}\sigma_{n+p}.$  By \eqref{def p}, the first summand on the right is the general term of a
		convergent series. For the addend we have
		$$2^{-p-1}\sigma_{n+p} <  1/2a \cdot 2^{p}\,\sigma_n \,\prod_{k=n}^{n+p-1} (1+\sigma_k/2),$$ by \eqref{n+p}.
		From \eqref{def p} it follows that $2^{p}\,\sigma_n \leq \sqrt{a \sigma_n}< 3 \sqrt{a \epsilon_n},$ as
		$\epsilon_n<1/36.$ Let us show that
		\begin{equation}\label{prod}
			\prod_{k=n}^{n+p-1} (1+\sigma_k/2)<2.
		\end{equation}
		This will give the estimation
		$2^{-p-1}\sigma_{n+p} < 3 \sqrt{\epsilon_n/a},$ where the right part is the general term of a convergent series. Then $\sum_{k=1}^{\infty}s_k<\infty, $ which is the desired conclusion, by \eqref{PW}.
		
		Thus, it remains to prove  \eqref{prod}. We use notations of Lemma \ref{bbb}. By  \eqref{def p}, we have
		$1+\sigma_n/2 \leq 1+\,a\,4^{-p}/2 < b_1.$ Then,
		$$1+\sigma_{n+1}/2 < 1+\,\frac{a}{1-4\epsilon_{n+1}}\,4^{-p+1}(1+\sigma_n/2)<1+4^{-p+1}\,b_1=b_2/b_1$$
		and $(1+\sigma_n/2 )( 1+\sigma_{n+1}/2) <b_2.$ Similarly, by  \eqref{n+p} and  \eqref{def p},
		$$ 1+\sigma_{n+k+1}/2 < 1+\,\frac{a}{(1-4\epsilon_{n+1})\cdots (1-4\epsilon_{n+k})} \,\,4^{-p+k}\, b_k < b_{k+1}/b_k$$
		for $k\leq p-2.$ Lemma \ref{bbb} now yields \eqref{prod}. 
	\end{proof}

	\bibliographystyle{spmpsci}

\end{document}